\providecommand{\U}[1]{\protect\rule{.1in}{.1in}}
\newtheorem{theorem}{Theorem}[section]
\newtheorem{corollary}[theorem]{Corollary}
\newtheorem{definition}[theorem]{Definition}
\newtheorem{proposition}[theorem]{Proposition}
\newtheorem{remark}[theorem]{Remark}
\newenvironment{proof}[1][Proof]{\textbf{#1.} }{\hfill\rule{0.5em}{0.5em}}
{\catcode`\@=11\global\let\AddToReset=\@addtoreset
\AddToReset{equation}{section}

\AddToReset{theorem}{section}

\begin{document}
\title{Wiener criteria for existence of large solutions of  \\quasilinear elliptic equations with absorption}
\author{
 {\bf Nguyen Quoc Hung\thanks{ E-mail address: Hung.Nguyen-Quoc@lmpt.univ-tours.fr}}\\[1mm]
 {\bf Laurent V\'eron\thanks{ E-mail address: Laurent.Veron@lmpt.univ-tours.fr}}\\[2mm]
{\small Laboratoire de Math\'ematiques et Physique Th\'eorique, }\\
{\small  Universit\'e Fran\c{c}ois Rabelais,  Tours,  FRANCE}}


\newcommand{\abs}[1]{\left |#1\right |}
\newcommand{\myfrac}[2]{{\displaystyle \frac{#1}{#2} }}
\newcommand{\myint}[2]{{\displaystyle \int_{#1}^{#2}}}



\date{}
\maketitle\medskip

\date{}  
\abstract{ We obtain sufficient  conditions, expressed in terms of Wiener type tests involving Hausdorff or Bessel capacities, for the existence of large solutions to equations (1) $-\Delta_pu+e^{ u}-1=0$ or (2) $-\Delta_pu+ u^q=0$ in a bounded domain $\Omega$ 
when $q>p-1>0$. We apply our results to equations (3) $-\Delta_pu+a\abs{\nabla u}^{q}+bu^{s}=0$,
 (4) $\Delta_p u+u^{-\gamma}=0$ with $1<p\le 2$, $1\le q\le p$, $a>0, b>0$ and $q>p-1$, $s\geq p-1$,  $\gamma>0$.\medskip
 
 \noindent {\it \footnotesize 2010 Mathematics Subject Classification}. {\scriptsize
31C15, 35J92, 35F21, 35B44}.\smallskip

 \noindent {\it \footnotesize Key words:} {\scriptsize quasilinear elliptic equations, Wolff potential, maximal functions, Hausdorff capacities, Bessel capacities}.

\section{Introduction}

Let $\Omega$ be a bounded domain in $\mathbb{R}^N$ ($N\geq 2$) and $1<p\leq N$. We denote $\Delta_pu=\text{div}(\abs{\nabla u}^{p-2}\nabla u)$, $\rho(x)=\text{dist}(x,\partial\Omega)$. In this paper we study some questions relative to the existence of solutions to the problem
\begin{equation}\label{3hvIn1}\begin{array} {ll}
-\Delta_pu+g(u)=0\qquad\text{in }\Omega\\
\phantom{-}
\displaystyle\lim_{\rho(x)\to 0}u(x)=\infty
\end{array}\end{equation}
where $g$ is a continuous nondecreasing function vanishing at $0$, and most often $g(u)$ is either $\text{sign}(u)(e^{ \abs u} -1)$ or $\abs u^{q-1}u$ with $q>p-1$. A solution to problem \eqref{3hvIn1} is called a {\it large solution.} When the domain is regular in the sense that the Dirichlet problem with continuous boundary data $\phi$
\begin{equation}\label{3hvIn2}\begin{array} {ll}
-\Delta_pu+g(u)=0\qquad\text{in }\Omega,\\
u-\phi \in W^{1,p}_{0}(\Omega), u\in W_{\text{loc}}^{1,p}(\Omega)\cap L^\infty(\Omega),
\end{array}\end{equation}
admits a solution $u\in C(\overline{\Omega})$, it is clear that problem (\ref{3hvIn1}) admits a solution provided problem $-\Delta_pu+g(u)=0~~\text{in }\Omega$ having a maximal solution, see \cite[Chapter 5]{33MV}. It is known that a necessary and sufficient condition for the solvability of problem (\ref{3hvIn2}) in case $g(u)\equiv0$ is the {\it Wiener criterion}, due to Wiener \cite{33WI} when $p=2$ and
Maz'ya \cite{33MA}, Kilpelainen and Mal\'y \cite{33KM} when $p\neq 2$, in general case is proved by Mal\'y and Ziemer  \cite{33MZ}. This condition is
\begin{equation}\label{3hvIn3}\begin{array} {ll}
\myint{0}{1}\left(\myfrac{C_{1,p}(B_t(x)\cap\Omega^c)}{t^{N-p}}\right)^{\frac{1}{p-1}}\myfrac{dt}{t}=\infty\qquad\forall x\in\partial\Omega,
\end{array}\end{equation}
where $C_{1,p}$ denotes the capacity associated to the space $W^{1,p}(\mathbb{R}^N)$.  The existence of a maximal solution is guaranteed for a large class of nondecreasing nonlinearities $g$ satisfying the Vazquez condition\cite{33VA} 
\begin{equation}\label{3hvIn3b}\begin{array} {ll}
\myint{a}{\infty}\myfrac{dt}{\sqrt[\,p]{G(t)}}<\infty\quad\text{where }\; G(t) =\myint{0}{t}g(s)ds
\end{array}\end{equation}
for some $a>0$. This is an extension of the Keller-Osserman condition \cite{33KE}, \cite{33OS}, which is the above relation when $p=2$. If for $R>diam(\Omega)$ there exists a function $v$ which satisfies
\begin{equation}\label{3hvIn4}\begin{array} {ll}
-\Delta_pv+g(v)=0\qquad&\text{in } B_R\setminus\{0\},\\
\phantom{-\Delta_p+g(v)}
v=0&\text{on } \partial B_R,\\
\phantom{....-}
\displaystyle\lim_{x\to 0}v(x)=\infty,
\end{array}\end{equation}
then it is easy to see that the maximal solution $u$ of 
\begin{equation}\label{3hvIn5}\begin{array} {ll}
-\Delta_pu+g(u)=0\qquad\text{in } \Omega
\end{array}\end{equation}
is a large  solution, without any assumption on the regularity of $\partial\Omega$. Indeed,  $ x\mapsto v(x-y)$ is a solution of \eqref{3hvIn5} in $\Omega$ for all $y\in \partial\Omega$, thus $u(x)\geq v(x-y)$ for any $x\in\Omega, y\in \partial\Omega$. It follows $\lim_{\rho(x)\to 0}u(x)=\infty$ since $\lim_{z\to 0}v(z)=\infty $.\medskip \\   Remark that the existence of a (radial) solution to problem (\ref{3hvIn4}) needs the fact that equation (\ref{3hvIn5}) admits solutions with isolated singularities, which is usually not true if the growth of $g$ is too strong since Vazquez and V\'eron prove in \cite{33VV} that if 
\begin{equation}\label{3hvIn5+}\begin{array} {ll}
\displaystyle\liminf_{\abs{r}\to\infty}\abs{r}^{-\frac{N(p-1)}{N-p}}sign (r)g(r)>0~~\text{ with } ~p<N,
\end{array}\end{equation}
isolated singularities of solutions of (\ref{3hvIn5}) are removable. Conversely, if $p-1<q<\frac{N(p-1)}{N-p}$ with $p<N$,  Friedman and V\'eron \cite{33FV} characterize the behavior of positive singular solutions to
\begin{equation}\label{3hvIn5++}\begin{array} {ll}
-\Delta_pu+u^q=0
\end{array}\end{equation}
with an isolated singularities.
 In 2003, Labutin \cite{33LA} show that a necessary and sufficient condition in order the following problem be solvable
\begin{equation*}\begin{array} {ll}
-\Delta u+\abs u^{q-1}u=0\qquad\text{in }\Omega,\\
\phantom{,,,-}
\displaystyle\!\!\lim_{\rho(x)\to 0}u(x)=\infty,
\end{array}\end{equation*}
 is that 
\begin{equation*}\begin{array} {ll}
\myint{0}{1}\myfrac{C_{2,q'}(B_t(x)\cap \Omega^c)}{t^{N-2}}\myfrac{dt}{t}=\infty\qquad\forall x\in\partial\Omega,
\end{array}\end{equation*}
where $C_{2,q'}$ is the capacity associated to the Sobolev space $W^{2,q'}(\mathbb{R}^N)$ and $q'=q/(q-1)$, $N\geq 3$. Notice that this condition is always satisfied if $q$ is subcritical, i.e. $q<N/(N-2)$. We refer to \cite{33MV} for other related results. Concerning the exponential case of problem (\ref{3hvIn1}) nothing is known, even in the case $p=2$, besides the simple cases already mentioned. \smallskip

In this article we give sufficient conditions, expressed in terms of Wiener tests, in order problem (\ref{3hvIn1}) be solvable  in the two cases $g(u)=\text{sign}(u)(e^{|u|}-1)$ and $g(u)= \abs u^{q-1}u$, $q>p-1$.  For $1<p\leq N$, we denote by $\mathcal{H}_1^{N-p}(E)$ the Hausdorff capacity of a set $E$
defined by
\begin{equation*}\begin{array} {ll}
\mathcal{H}_1^{N-p}(E)=\inf\left\{\displaystyle\sum_jh^{N-p}(B_j): E\subset\bigcup B_j,\, diam (B_j)\leq 1\right\}
\end{array}\end{equation*}
where the $B_j$ are balls and $h^{N-p}(B_r)=r^{N-p}$.
Our main result concerning the exponential case is the following\medskip

\noindent{\bf Theorem 1}. {\it Let $N\geq 2$ and $1<p\leq N$.
If \begin{equation}\label{3hvco1}
\int_{0}^{1}\left(\frac{\mathcal{H}_1^{N-p}(\Omega^c\cap B_r(x))}{r^{N-p}}\right)^{\frac{1}{p-1}}\frac{dr}{r}=+\infty~~\forall x\in \partial \Omega,
\end{equation}
 then there exists  $u\in C^{1}(\Omega)$ satisfying 
\begin{equation}\label{3hvIn9}\begin{array} {lll}
   - \Delta_p u + e^{u}-1 = 0\qquad\text{in }\Omega,  \\ 
   \phantom{e^{\lambda u}}
\displaystyle\,\!\lim_{\rho(x)\to 0}u(x)=\infty.
\end{array}
\end{equation}
} 
\medskip

Clearly, when $p=N$, we have $\mathcal{H}_1^{N-p}(\{x_0\})=1$ for all $x_0\in\mathbb{R}^N$ thus, \eqref{3hvco1} is true for any open domain $\Omega$. 

We also obtain a sufficient condition for the existence of a large solution in the power case expressed in terms of some $C_{\alpha,s}$ Bessel capacity in $\mathbb{R}^N$  associated to the Besov space $B^{\alpha,s}(\mathbb{R}^N)$.\medskip

\noindent{\bf Theorem 2}. {\it Let $N\geq 2$, $1<p<N$ and $q_1>\frac{N(p-1)}{N-p}$. If 
\begin{equation}\label{3hvco2}
\int\limits_0^1 {{{\left( {\frac{{C_{p,\frac{q_1}{{q_1 - p + 1}}}\left( {{\Omega ^c} \cap B_r(x)} \right)}}{{{r^{N - p}}}}} \right)}^{\frac{1}{{p - 1}}}}\frac{{dr}}{r}}  =  + \infty~~~\forall x\in \partial \Omega,
\end{equation}
then, for any $p-1<q<\frac{pq_1}{N}$ there exists  $u\in C^{1}(\Omega)$ satisfying 
\begin{equation}\label{3hvIn8}\begin{array} {lll}
   - \Delta_p u + u^q = 0\qquad\text{in }\Omega,  \\ 
   \phantom{}
\displaystyle\lim_{\rho(x)\to 0}u(x)=\infty.
\end{array}
\end{equation}}

We can see that condition \eqref{3hvco1} implies \eqref{3hvco2}. 
In view of Labutin's theorem this previous result is not optimal in the case $p=2$, since the involved capacity is $C_{2,q'_1}$ with $q'_1$ and thus there exists a solution to
\begin{equation*}\begin{array} {lll}
   - \Delta_p u + u^{q_1}  = 0\qquad\text{in }\Omega  \\ 
   \phantom{_1}
\displaystyle\lim_{\rho(x)\to 0}u(x)=\infty
\end{array}
\end{equation*}
with $q_1>q$. \smallskip

At end we apply the previous theorem to quasilinear viscous Hamilton-Jacobi equations: 
\begin{equation}\label{3hvIn10a}\begin{array} {lll}
   - \Delta_p u +a\abs{\nabla u}^{q}+ b|u|^{s-1}u = 0\qquad\text{in }\Omega,  \\ 
   \phantom{}
\displaystyle\!~~~~~u\in C^1(\Omega), \lim_{\rho(x)\to 0}u(x)=\infty.
\end{array}
\end{equation}
For $q_1>p-1$ and $1<p\leq 2$, if equation \eqref{3hvIn8} admits a solution with $q=q_1$, then for any $a>0, b> 0$ and $q\in(p-1,\frac{pq_1}{q_1+1})$,  $ s\in [p-1,q_1)$
there exists a positive solution to
\eqref{3hvIn10a}. Conversely, if for some $a,b>0$, $s>p-1$ there exists a solution to equation \eqref{3hvIn10a} with $1<q=p\leq 2$, 
then for any $q_1>p-1$, $1\le q_1\le p$, $s_1\geq p-1$, $a_1,b_1>0$ there exists a positive solution to equation \eqref{3hvIn10a} with parameters $q_1,s_1,a_1,b_1$ replacing $q,s,a,b$. Moreover, we also prove that the previous statement holds if for some $\gamma>0$ there exists $u\in C(\overline\Omega)\cap C^1(\Omega)$, $u>0$ in $\Omega$ satisfying
\begin{equation*}\begin{array} {lll}
   - \Delta_p u +u^{-\gamma} = 0~~\text{in }\Omega,  \\ 
   \phantom{- \Delta u  +u^{-\gamma}}
u=0~~\text{on } \partial\Omega.
\end{array}
\end{equation*}
We would like to remark that the case $p=2$ was studied in \cite{33LALI}. In particular, if the boundary of $\Omega$ is smooth  then \eqref{3hvIn10a} has a solution with $s=1$ and $1<q\leq 2,a>0,b>0$. 

\section{Morrey classes and Wolff potential estimates}
\setcounter{equation}{0}
In this section we assume that $\Omega$ is a bounded open subset of $\mathbb{R}^N$ and $1<p<N$. We also denote by $B_r(x)$ the open ball of center $x$ and radius $r$ and $B_r=B_r(0)$. We also recall that a solution of \eqref{3hvIn1} belongs to $C_{loc}^{1,\alpha}(\Omega)$ for some $\alpha\in (0,1)$, and is more regular (depending on $g$) on the set $\{x\in\Omega:\abs{\nabla u(x)}\neq 0\}$.

\begin{definition}\label{3hvMorrey} A function $f\in L^1(\Omega)$  belongs to the Morrey space $\mathcal{M}^s(\Omega)$, $1\leq s\leq\infty$, if there is a constant $K$ such that 
\begin{equation*}
\int_{\Omega\cap B_r(x)}|f|dy\le K r^{\frac{N}{s'}}~~\forall r>0,\,\forall x\in\mathbb{R}^N.
\end{equation*}
The norm is defined as the smallest constant $K$ that satisfies this inequality; it is denoted by $||f||_{\mathcal{M}^s(\Omega)}$. Clearly $L^s(\Omega)\subset \mathcal{M}^{s}(\Omega)$.
\end{definition}

\begin{definition}\label{3hvWolff} Let $R\in (0,\infty]$ and $\mu\in\mathfrak{M}^b_+(\Omega)$, the set of nonnegative and bounded Radon measures in $\Omega$. We define the ($R$-truncated) Wolff potential of $\mu$ by
\begin{equation*}
{\bf W}^R_{1, p}[\mu](x)=\myint{0}{R}\left(\myfrac{\mu(B_t(x))}{t^{N- p}}\right)^{\frac{1}{p-1}}\myfrac{dt}{t}\qquad\forall x\in \mathbb{R}^N,
\end{equation*}
and the ($R$-truncated) fractional maximal potential  of $\mu$ by
\begin{equation*}
{\bf  M}_{p,R}[\mu](x)=\sup_{0<t<R}\frac{\mu(B_t(x))}{t^{N-p}} \qquad\forall x\in \mathbb{R}^N,
\end{equation*}
where the measure is extended by $0$ in $\Omega^c$.
\end{definition}
We recall a result proved in \cite{33HoJa} (see also \cite[Theorem 2.4]{33BVHV}).

\begin{theorem}  \label{3hvth2} Let $\mu$ be a nonnegative Radon measure in $\mathbb{R}^N$. There exist positive constants $C_1,C_2$ depending on $N,p$  such that 
$$\int_{2B}\exp(C_1\mathbf{W}^R_{1,p}[\chi_B\mu])dx\le C_2r^{N},$$ 
 for all $B=B_r(x_0)\subset \mathbb{R}^N$, $2B=B_{2r}(x_0)$, $R>0$ such that $||{\bf  M}_{p,R}[\mu]||_{L^\infty(\mathbb{R}^N)}\le 1$. 
\end{theorem}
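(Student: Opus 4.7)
I would deduce the statement from an exponential-type distribution-function estimate via the layer-cake formula. Writing $W = \mathbf{W}^R_{1,p}[\chi_B\mu]$, the identity
\[
\int_{2B}\exp(C_1 W)\,dx = |2B| + C_1\int_0^\infty e^{C_1\lambda}\,|\{x\in 2B : W(x)>\lambda\}|\,d\lambda
\]
shows that the theorem reduces to the bound
\[
|\{x\in 2B : W(x)>\lambda\}|\le C_3\,r^N e^{-C_4\lambda}\qquad (\lambda>0),
\]
with $C_3, C_4$ depending only on $N, p$; any $C_1<C_4$ then gives the desired conclusion for an appropriate $C_2=C_2(N,p)$.

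\emph{Tail truncation.} The hypothesis $\|\mathbf{M}_{p,R}[\mu]\|_\infty\le 1$ yields $\mu(B)\le r^{N-p}$. For $x\in 2B$ and $t\ge 3r$ one has $B\subset B_t(x)$, hence $(\chi_B\mu)(B_t(x))=\mu(B)\le r^{N-p}$, and since $N>p$ one computes
\[
\int_{3r}^R \left(\frac{r^{N-p}}{t^{N-p}}\right)^{1/(p-1)}\frac{dt}{t}\le c(N,p).
\]
Thus $W(x)$ differs from the short-range potential $W^{3r}(x)=\int_0^{3r}((\chi_B\mu)(B_t(x))/t^{N-p})^{1/(p-1)}\,dt/t$ by a universally bounded term, and it suffices to prove the exponential estimate for $W^{3r}$ on $2B$.

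\emph{Good-$\lambda$ inequality and iteration.} The core step is a Honzik--Jaye type good-$\lambda$ inequality: there exist $\kappa,C>0$ depending only on $N, p$ such that, for every $\lambda>0$ and $\varepsilon\in(0,1]$,
\[
|\{W^{3r}>2\lambda,\ \mathbf{M}_{p,R}[\mu]\le\varepsilon\}\cap 2B|\ \le\ C\,\varepsilon^{\kappa}\,|\{W^{3r}>\lambda\}\cap 2B|.
\]
Since $\mathbf{M}_{p,R}[\mu]\le 1$ globally, the truncation condition is automatic; picking $\varepsilon_0$ small enough that $\theta:=C\varepsilon_0^{\kappa}<1$ and iterating from an initial threshold $\lambda_0=\lambda_0(N,p)$ where $|\{W^{3r}>\lambda_0\}\cap 2B|\le\theta\,|2B|$ (which exists by a Chebyshev argument applied to a Fubini-type integral bound on $W^{3r}$ using the maximal-function hypothesis) yields $|\{W^{3r}>(2k+1)\lambda_0\}\cap 2B|\le\theta^{k+1}|2B|$, i.e.\ the required geometric/exponential decay of the distribution function.

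\textbf{Main obstacle.} The delicate step is the good-$\lambda$ inequality above and, specifically, the extraction of the quantitative gain $\varepsilon^{\kappa}$. Its proof performs a Whitney decomposition of the open set $\{W^{3r}>\lambda\}\cap 2B$ and, on each Whitney cube $Q$, splits $\chi_B\mu$ into its restriction to a concentric dilate of $Q$ and the complementary piece: the local part is dominated by the smallness of $\mathbf{M}_{p,R}[\mu]$ raised to the nonlinear power $1/(p-1)$, while the remainder evaluated at any point of $Q$ differs by a universal constant from $W^{3r}$ at a boundary point of $Q$, where by construction it is $\le\lambda$. Combining these two estimates so that the nonlinear exponent $1/(p-1)$ gives the right algebraic gain in $\varepsilon$ is where the essential technical difficulty (and the reason the value $p\neq 2$ matters) lies.
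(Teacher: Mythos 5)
The paper does not prove this theorem: it is explicitly recalled from Honzik--Jaye \cite{33HoJa} (see also \cite[Theorem 2.4]{33BVHV}), so there is no in-paper proof to compare against. Your general outline --- layer-cake reduction to exponential decay of the distribution function, tail truncation using $\mu(B)\le r^{N-p}$, and a Whitney-based good-$\lambda$ inequality --- is indeed the strategy of \cite{33HoJa}.

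However, there is a genuine gap in the good-$\lambda$ step as you state it. You write the inequality with the maximal-function side condition $\mathbf{M}_{p,R}[\mu]\le\varepsilon$ \emph{unscaled} in $\lambda$, and then claim that since $\|\mathbf{M}_{p,R}[\mu]\|_\infty\le 1$ ``the truncation condition is automatic.'' This is only automatic for $\varepsilon=1$; for the $\varepsilon_0<1$ you need in order to make $\theta=C\varepsilon_0^\kappa<1$, the set $\{\mathbf{M}_{p,R}[\mu]\le\varepsilon_0\}$ is not all of $2B$, and the hypothesis gives you no control over its complement. Taking $\varepsilon=1$ instead gives only $|\{W>2\lambda\}|\le C\,|\{W>\lambda\}|$, which is vacuous once $C\ge 1$. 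The correct form (as in \cite{33HoJa}) has the side condition scaled by $\lambda$, e.g.
\begin{equation*}
\bigl|\{W>a\lambda\}\cap\{(\mathbf{M}_{p,R}[\mu])^{1/(p-1)}\le\varepsilon\lambda\}\bigr|\;\le\;\phi(\varepsilon)\,\bigl|\{W>\lambda\}\bigr|,
\end{equation*}
with $\phi(\varepsilon)\to 0$ as $\varepsilon\to 0$ (in fact $\phi(\varepsilon)=b\,e^{-c/\varepsilon}$ in Honzik--Jaye). Then the hypothesis $(\mathbf{M}_{p,R}[\mu])^{1/(p-1)}\le 1$ makes the side condition automatic precisely for $\lambda\ge 1/\varepsilon$; choosing $\varepsilon=1/\lambda$ for $\lambda\ge 1$ yields $|\{W>a\lambda\}|\le b\,e^{-c\lambda}\,|2B|$, i.e.\ the exponential decay you need, and even the polynomial gain $\phi(\varepsilon)=C\varepsilon^\kappa$ would suffice via iteration from $\lambda_k\sim a^k$. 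Without the $\lambda$-scaling the iteration simply does not close, so this is a substantive error rather than a cosmetic omission; it is exactly the point at which the hypothesis $\|\mathbf{M}_{p,R}[\mu]\|_\infty\le 1$ enters the argument nontrivially.
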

For $k\geq 0$, we set $T_k(u)=sign (u)\min\{k,\abs u\}$.

\begin{definition}\label{3hvsupersol} Assume $f\in L^1_{loc}(\Omega)$. We say that a measurable function $u$ defined in $\Omega$ is a renormalized  supersolution of 
\begin{equation}\label{3hvPr4}-\Delta_pu+f=0\qquad\text{in }\Omega
\end{equation}
if,  for any $k>0$, $T_k(u)\in W^{1,p}_{loc}(\Omega)$, $\abs{\nabla u}^{p-1}\in L^1_{loc}(\Omega)$ and there holds
\begin{equation*}
\int_{\Omega}(|\nabla T_k(u)|^{p-2}\nabla T_k(u)\nabla\varphi+f\varphi)dx\geq  0
\end{equation*}
for all $\varphi\in W^{1,p}(\Omega)$ with compact support in $\Omega$ and such that $0\le\varphi \leq k-T_k(u)$, and if  $-\Delta_pu+f$ is a positive  distribution in $\Omega$. 
\end{definition}
The following result is proved in \cite[Theorem 4.35]{33MZ}.
\begin{theorem} If $f\in \mathcal{M}^{\frac{N}{p-\epsilon}}(\Omega)$ for some $\epsilon\in (0,p)$, $u$ is a nonnegative renormalized supersolution of \eqref{3hvPr4} and set $\mu:=-\Delta_pu+f$. Then  there holds 
\begin{equation*}
u(x)+||f||^{\frac{1}{p-1}}_{\mathcal{M}^{\frac{N}{p-\varepsilon}}(\Omega)}\geq C \mathbf{W}^{\frac{r}{4}}_{1,p}[\mu](x)
\qquad\forall x\in\Omega\,\text { s.t. }B_r(x)\subset \Omega,
\end{equation*}
for some $C$ depending only on $N,p,\varepsilon,diam(\Omega)$.
\end{theorem}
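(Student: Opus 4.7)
The strategy is to reduce the estimate to the classical Kilpel\"ainen--Mal\'y pointwise Wolff lower bound for $p$-superharmonic functions and to absorb the forcing $f$ via its Morrey norm. Since $u$ is a nonnegative renormalized supersolution and $\mu=-\Delta_p u+f\geq 0$ as a distribution, $u$ can be viewed, up to the controlled perturbation induced by $f$, as a $p$-superharmonic function with Riesz measure $\mu$.

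Fix $x_0\in\Omega$ with $B_r(x_0)\subset\Omega$, set $r_k=2^{-k}r/4$ and $B_k=B_{r_k}(x_0)$. The core step is a one-scale lower bound of Kilpel\"ainen--Mal\'y type: comparing $u$ on $B_k$ with the renormalized solution $v_k\in W^{1,p}_0(B_k)$ of $-\Delta_p v_k=\chi_{B_k}\mu$ (for which the standard Wolff lower bound is available), and controlling the discrepancy $u-v_k$ through the comparison principle together with a Morrey-type estimate for the Dirichlet problem $-\Delta_p w=f$ on $B_k$, one obtains
\begin{equation*}
\inf_{B_{k+1}}u\geq c\left(\frac{\mu(B_k)}{r_k^{N-p}}\right)^{\frac{1}{p-1}}+\inf_{B_k}u-c\left(\frac{|f|(B_k)}{r_k^{N-p}}\right)^{\frac{1}{p-1}}.
\end{equation*}
The Morrey hypothesis on $f$ enters the local analysis precisely through the last error term.

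Iterating this one-scale inequality from $k=0$ to $\infty$ and passing to the pointwise value at $x_0$ yields
\begin{equation*}
u(x_0)\geq c\,\mathbf{W}^{r/4}_{1,p}[\mu](x_0)-c\sum_{k\geq 0}\left(\frac{|f|(B_k)}{r_k^{N-p}}\right)^{\frac{1}{p-1}}.
\end{equation*}
The hypothesis $f\in\mathcal{M}^{N/(p-\varepsilon)}(\Omega)$ gives $|f|(B_t)\leq\|f\|_{\mathcal{M}^{N/(p-\varepsilon)}(\Omega)}\,t^{N-p+\varepsilon}$, so each term in the error series is bounded by $\|f\|^{1/(p-1)}_{\mathcal{M}^{N/(p-\varepsilon)}(\Omega)}\,r_k^{\varepsilon/(p-1)}$, and the resulting geometric series is controlled by a constant depending only on $N$, $p$, $\varepsilon$ and $\mathrm{diam}(\Omega)$. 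This produces exactly the additive term appearing in the claim.

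The main obstacle is making the one-scale lower bound rigorous in the renormalized-supersolution framework with a signed distributional source $\mu-f$: the classical Kilpel\"ainen--Mal\'y argument relies on truncation at level sets, capacitary weak Harnack inequalities, and comparison with $p$-superharmonic functions whose Riesz measure is genuinely nonnegative, and each of these ingredients must be adapted to tolerate the perturbation $f$ without leaving the renormalized-supersolution class. Once this local step is in place---as in the proof of \cite[Theorem 4.35]{33MZ}---the dyadic summation and the conversion of the Morrey bound into the $r$-independent additive error are essentially routine.
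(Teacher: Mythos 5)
The paper does not actually prove this statement: it is quoted verbatim from Mal\'y--Ziemer (the text reads ``The following result is proved in \cite[Theorem 4.35]{33MZ}''), so there is no in-paper argument to compare against. Your outer scaffolding --- dyadic one-scale iteration, telescoping to $u(x_0)\geq c\,\mathbf{W}^{r/4}_{1,p}[\mu](x_0)-c\sum_k(\,|f|(B_k)/r_k^{N-p})^{1/(p-1)}$, then converting $|f|(B_t)\leq\|f\|_{\mathcal{M}^{N/(p-\varepsilon)}}\,t^{N-p+\varepsilon}$ into a convergent geometric series bounded by a $\mathrm{diam}(\Omega)$-dependent constant --- is exactly how the additive Morrey term arises, and that part of the plan is sound.

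The concrete gap is in your proposed one-scale step. You suggest comparing $u$ on $B_k$ with the renormalized solution $v_k$ of $-\Delta_p v_k=\chi_{B_k}\mu$ and ``controlling the discrepancy $u-v_k$'' via a Morrey estimate for an auxiliary problem $-\Delta_p w=f$. This implicitly appeals to a superposition $u\gtrsim v_k+w$, which fails: $\Delta_p$ is nonlinear, so solutions with measures $\chi_{B_k}\mu$ and $-\chi_{B_k}f$ cannot be added to produce a subsolution to compare $u$ against, nor can the comparison principle be applied separately to the two pieces. The genuine argument (in Kilpel\"ainen--Mal\'y and in \cite[Theorem 4.35]{33MZ}) does not introduce auxiliary Dirichlet solutions at all; it works directly with $u$ via level-set truncations, a capacitary weak Harnack inequality for supersolutions, and a direct estimate showing that the $f$-contribution to the one-scale increment is controlled by $(|f|(B_k)/r_k^{N-p})^{1/(p-1)}$. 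You do gesture at precisely these ingredients in your last paragraph and then fall back on the same citation the paper uses, so the overall route is the intended one; but the explicit $v_k$/$w$ decomposition you wrote down as ``the core step'' should be replaced by the truncation/weak-Harnack argument, since as stated it would not go through for $p\neq 2$.
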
 

Concerning renormalized solutions (see \cite{33DMOP} for the definition) of
 \begin{equation}\label{3hvPr7}
 -\Delta_pu+f=\mu\qquad\text{in }\Omega,
\end{equation}
where $f\in L^1(\Omega)$ and $\mu\in\mathfrak{M}_+^b(\Omega)$, we have
\begin{corollary}\label{3hvcoro1} Let $f\in \mathcal{M}^{\frac{N}{p-\epsilon}}(\Omega)$ and $\mu\in \frak{M}^b_+(\Omega)$. If $u$ is a renormalized solution to (\ref{3hvPr7})  and $\inf_{\Omega}u>-\infty$ then  there exists a positive constant  $C$ depending only on $N,p,\varepsilon,diam(\Omega)$ such that 
 \begin{equation*}u(x)+||f||^{\frac{1}{p-1}}_{\mathcal{M}^{\frac{N}{p-\varepsilon}}(\Omega)}\geq \inf_{\Omega}u+ C \mathbf{W}^{\frac{d(x,\partial\Omega)}{4}}_{1,p}[\mu](x)~~\forall x\in\Omega. 
 \end{equation*}
\end{corollary}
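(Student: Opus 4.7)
The plan is to reduce Corollary 2.6 directly to Theorem 2.5 by a vertical translation. Set $m=\inf_{\Omega}u$, which is finite by hypothesis, and let $v=u-m\ge 0$. Since the $p$-Laplacian annihilates constants, $v$ should be a renormalized solution of $-\Delta_p v+f=\mu$ in $\Omega$ as well. The first step is to verify this: because $T_k(v)$ and $T_k(u)$ differ only by a truncation argument and have the same (local) gradients off a set of measure zero where the truncation levels agree, the condition $T_k(v)\in W^{1,p}_{loc}(\Omega)$ and $|\nabla v|^{p-1}\in L^{1}_{loc}(\Omega)$ follows from the corresponding properties of $u$; the distributional identity $-\Delta_p v=-\Delta_p u$ gives $-\Delta_p v+f=\mu$.

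The second step is to interpret $v$ as a renormalized \emph{supersolution} of $-\Delta_p v+f=0$ in the sense of Definition 2.4. The distributional inequality $-\Delta_p v+f=\mu\ge 0$ is exactly what is required for the last clause of Definition 2.4, and a brief approximation argument (testing the renormalized equation against $\varphi\in W^{1,p}(\Omega)$ with compact support and $0\le \varphi\le k-T_k(v)$, using $\mu\ge 0$) yields the variational inequality.

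Now I would apply Theorem 2.5 to $v$ with $\varepsilon$ as in the hypothesis, choosing the admissible radius $r=d(x,\partial\Omega)$ (so that $B_r(x)\subset\Omega$). This gives, for every $x\in\Omega$,
\begin{equation*}
v(x)+\|f\|^{\frac{1}{p-1}}_{\mathcal{M}^{\frac{N}{p-\varepsilon}}(\Omega)}\ge C\,\mathbf{W}^{\frac{d(x,\partial\Omega)}{4}}_{1,p}[\mu](x),
\end{equation*}
with $C$ depending only on $N,p,\varepsilon$ and $\mathrm{diam}(\Omega)$. Substituting $v=u-\inf_\Omega u$ rearranges this into the stated inequality.

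The only non-routine point is the verification, in steps 1--2, that a renormalized solution remains a renormalized solution after a constant shift and qualifies as a renormalized supersolution of the ``absorbed'' equation; this is where one must be careful because the definition in Definition 2.4 involves truncations $T_k$ which do not commute with vertical shifts. I expect this to be handled either by a direct computation using the chain-rule structure of renormalized solutions (passing from test functions of the form $S(u)\varphi$ for $S\in W^{1,\infty}(\mathbb{R})$ with compact support to the corresponding ones for $v$) or by invoking the equivalent formulations from \cite{33DMOP}. Once this shift invariance is secured, the rest is a direct application of Theorem 2.5.
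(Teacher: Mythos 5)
Your proposal is correct and follows the intended route: shift $u$ down by its (finite) infimum to obtain a nonnegative renormalized supersolution of $-\Delta_p v+f=0$ with $-\Delta_p v+f=\mu\ge0$, apply Theorem 2.5 with the maximal admissible radius $r=d(x,\partial\Omega)$, and rearrange. The paper states this as a direct corollary of Theorem 2.5 without giving details, and what you wrote is exactly the verification the authors leave to the reader, including the only mildly delicate point that the renormalized/supersolution structure survives the vertical shift.
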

The next result, proved in \cite[Theorem 1.1, 1.2]{33BVHV}, is an important tool for the proof of Theorems \textbf{1} and \textbf{2}. Before presenting we introduce the notation.  
\begin{definition}Let $s>1$ and $\alpha>0$.
We denote by $C_{\alpha,s}(E)$  the Bessel capacity of Borel set $E\subset \mathbb{R}^N$,
\begin{equation*}
C_{\alpha,s}(E)=\inf\{||\phi||^s_{L^s(\mathbb{R}^N)}:\phi\in L^s_+(\mathbb{R}^N),~~G_\alpha*\phi \geq \chi_E\}
\end{equation*}
where $\chi_E$ is the characteristic function of $E$ and $G_\alpha$ the Bessel kernel of order $\alpha$. \\
We say that a measure $\mu$ in $\Omega$ is absolutely continuous with respect to the capacity $C_{\alpha,s}$ in $\Omega$ if
\begin{align*}
\text{ for all }~E\subset \Omega, E\text{ Borel}, C_{\alpha,s}(E)=0 \Rightarrow |\mu|(E)=0.
\end{align*}
\end{definition}
\begin{theorem}\label{3hv06054} Let $\mu\in \mathfrak{M}_+^b(\Omega)$ and $q>p-1$.
\begin{description}
\item[a.] If $\mu$ is absolutely continuous with respect to the capacity $C_{p,\frac{q}{q+1-p}}$ in $\Omega$, then there exists a nonnegative renormalized solution $u$ to equation 
\begin{equation*}
 \begin{array}{ll}
  - \Delta_p u + u^q =\mu ~~~&\text{in }\;\Omega, 
  \\ \phantom{- \Delta_p  + u^q}
 u = 0&\text{on }\;\partial \Omega,  \\ 
 \end{array} 
\end{equation*} which satisfies 
\begin{align}\label{3hvine}
u(x)\leq C \mathbf{W}_{1,p}^{2\text{diam}(\Omega)}[\mu](x)~~\forall x\in\Omega
\end{align}
where $C$ is a positive constant depending on $p$ and $N$. 
\item[b.] If $\exp(C\mathbf{W}_{1,p}^{2\text{diam}(\Omega)}[\mu])\in L^1(\Omega)$ where $C$ is the previous constant, then  there exists a nonnegative renormalized solution $u$ to equation 
\begin{equation*}
 \begin{array}{ll}
  - \Delta_p u + e^u-1 =\mu ~~~&\text{in }\;\Omega, 
  \\ \phantom{- \Delta_p  + e^u-1}
 u = 0&\text{on }\;\partial \Omega,  \\ 
 \end{array} 
\end{equation*} which satisfies \eqref{3hvine}.
\end{description}
\end{theorem}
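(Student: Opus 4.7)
My plan is to construct the solution by an approximation argument. I would smooth $\mu$ by convolution to obtain nonnegative bounded measures $\mu_n$ with $\mu_n\rightharpoonup\mu$ weakly and uniformly bounded total mass. For each $n$ the operator $u\mapsto -\Delta_p u+u^q$ (resp.\ $u\mapsto -\Delta_p u+e^u-1$) is strictly monotone and coercive on $W^{1,p}_0(\Omega)$, so standard Leray--Lions theory produces a unique nonnegative weak solution $u_n\in W^{1,p}_0(\Omega)\cap L^\infty(\Omega)$ to the approximate problem with right-hand side $\mu_n$.

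The pointwise upper bound is obtained by comparison. Let $w_n\in W^{1,p}_0(\Omega)$ solve $-\Delta_p w_n=\mu_n$ with zero boundary data. Since $u_n^q\ge 0$ (resp.\ $e^{u_n}-1\ge 0$), the weak comparison principle gives $0\le u_n\le w_n$ in $\Omega$. The classical Kilpel\"ainen--Mal\'y pointwise upper bound for $p$-superharmonic functions, applied to $w_n$ extended by zero outside $\Omega$, yields $w_n(x)\le C\mathbf{W}^{2\text{diam}(\Omega)}_{1,p}[\mu_n](x)$, hence the target bound for $u_n$; this bound passes to any a.e.\ limit by Fatou's lemma.

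The main obstacle is passing to the limit while keeping the absorption term convergent in $L^1(\Omega)$. In case (b), the hypothesis $\exp(C\mathbf{W}^{2\text{diam}(\Omega)}_{1,p}[\mu])\in L^1(\Omega)$ combined with the pointwise bound furnishes a dominating integrable majorant for $e^{u_n}$; once $u_n\to u$ a.e.\ (via Boccardo--Gallou\"et compactness of $T_k(u_n)$ in $W^{1,p}_{loc}(\Omega)$), dominated convergence gives strong $L^1$ convergence of the absorption, and the Dal Maso--Murat--Orsina--Prignet stability theorem for renormalized solutions identifies the limit as the desired solution. Case (a) is the harder one: one must prevent concentration of $u_n^q$ on sets of zero $C_{p,q/(q+1-p)}$-capacity. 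I would cover $\Omega$ by small balls $B$ on which $\mathbf{M}_{p,R}[\chi_B\mu]$ is under control, use the capacitary characterization stating that a measure absolutely continuous with respect to $C_{p,q/(q+1-p)}$ admits a decomposition whose Wolff potentials lie in $L^q_{loc}$, and thereby obtain equi-integrability of $\mathbf{W}^R_{1,p}[\mu_n]^q$ and thus of $u_n^q$. This capacitary decomposition is the hardest single technical ingredient, since the exponent $q/(q+1-p)$ is precisely calibrated to the Sobolev--capacity characterization of right-hand sides admissible for the $p$-Laplace absorption equation, and without it the sequence $u_n^q$ could absorb a nontrivial singular part of $\mu$ into the limit equation.
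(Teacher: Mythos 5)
Note first that the paper does not prove this theorem: it is quoted verbatim from the authors' own earlier paper \cite{33BVHV}, Theorems 1.1 and 1.2, and is used here as a black box. So there is no in-paper proof to compare against, but your sketch is worth assessing on its own terms.

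The overall framework you describe --- approximate $\mu$, solve the approximate problems, use weak comparison against the pure $p$-Laplace solution to obtain the Kilpel\"ainen--Mal\'y Wolff-potential upper bound \eqref{3hvine}, then pass to the limit by Boccardo--Gallou\"et compactness and renormalized stability --- is indeed the right skeleton and is close in spirit to what \cite{33BVHV} actually does. However, there is a genuine gap in part (a). Mollification $\mu_n=\mu*\rho_n$ does not prevent the absorption term $u_n^q$ from concentrating: nothing in that scheme keeps $\mathbf{W}^{R}_{1,p}[\mu_n]^q$ equi-integrable, and your proposed remedy, ``cover $\Omega$ by small balls on which $\mathbf{M}_{p,R}[\chi_B\mu]$ is under control,'' is a tool for the \emph{exponential} integrability estimate (it is the hypothesis of Theorem \ref{3hvth2}, used in the proof of Theorem \ref{3hvth3}), not for $L^q$ equi-integrability, and it is not available for a general $\mu$ that is merely absolutely continuous with respect to $C_{p,\frac{q}{q+1-p}}$. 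What is actually required, and what \cite{33BVHV} proves and uses, is a decomposition theorem of Feyel--de la Pradelle/Dal Maso type: $\mu$ is absolutely continuous with respect to $C_{p,\frac{q}{q+1-p}}$ if and only if there is a nondecreasing sequence of measures $\mu_n\uparrow\mu$ with $\mathbf{W}^{2\mathrm{diam}(\Omega)}_{1,p}[\mu_n]\in L^q(\Omega)$; one must take \emph{these} $\mu_n$ as the approximating data, not mollifications, so that each approximate problem has its absorption term controlled in $L^1$ by the decomposition rather than by an a~priori dominating function. You correctly flag this decomposition as the hardest ingredient, but you treat it as something to ``use'' alongside a mollification scheme rather than as the replacement for it; without it the singular part of $\mu$ (singular relative to $C_{p,\frac{q}{q+1-p}}$, which is empty here but must be ruled out constructively) could be absorbed into the limit. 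Also, in part (b), the dominated-convergence argument needs $u_n\le C\,\mathbf{W}^{2\mathrm{diam}(\Omega)}_{1,p}[\mu]$ with the \emph{same} constant $C$ as in the hypothesis, since the exponential magnifies multiplicative constants; this is automatic once $\mu_n\le\mu$ (as with a monotone decomposition or truncation) but requires extra argument with a convolution approximation. Replacing mollification throughout by the monotone capacitary decomposition, with $\mu_n\le\mu$, fixes both issues simultaneously.
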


\section{Estimates from below}

If $G$ is any domain in $\mathbb{R}^N$ with a compact boundary and $g$ is nondecreasing, $g(0)=g^{-1}(0)=0$ and satisfies \eqref{3hvIn5+}) there always exists a maximal solution to \eqref{3hvIn5} in $G$. It is constructed as the limit, when $n\to\infty$, of the solutions of 
\begin{equation}\label{3hvb-n}\begin{array}{lll}
-\Delta_pu_n+g(u_n)=0\qquad&\text{in }G_n\\
\phantom{-,}
\!\displaystyle\lim_{\rho_n(x)\to 0}u_n(x)=\infty\\
\phantom{--,}
\!\displaystyle\lim_{\abs x\to\infty}u_n(x)=0\quad&\text{if $G_n$ is unbounded},
\end{array}\end{equation}
where $\{G_n\}_n$ is a sequence of smooth domains such that  $G_n\subset \overline{G}_n\subset G_{n+1}$ for all $n$, $\{\partial G_n\}_n$ is a bounded and $\bigcup\limits_{n = 1}^\infty  {{G _n}}  = G$ and $\rho_n(x):=\text{dist}(x,\partial G_n)$. Our main estimates are the following.
\begin{theorem}\label{3hvth3}
Let $K\subset B_{1/4}\backslash\{0\}$ be a compact set and let $U_j\in C^{1}(K^c)$, $j=1,2$,
 be the maximal solutions of  
\begin{equation}\label{3hvb-1}
-\Delta_p u +e^{u}-1=0\qquad\text{in }K^c
\end{equation}
for $U_1$ and
\begin{equation}\label{3hvb-2}
 -\Delta_p u +u^q=0\qquad\text{in }K^c
\end{equation}
for $U_2$, where $p-1<q<\frac{pq_1}{N}$. Then there exist constants $C_k$, $k=1,2,3,4$, depending on $N$, $p$ and $q$ such that
\begin{equation}\label{3hvinea}
U_1(0)\geq -C_1+C_2\int_{0}^{1}\left(\frac{\mathcal{H}_1^{N-p}(K\cap B_r)}{r^{N-p}}\right)^{\frac{1}{p-1}}\frac{dr}{r},
\end{equation}
and
\begin{equation}\label{3hvineb}
U_2(0)\geq -C_3+C_4\int_0^1 {{{\left( {\frac{{C{_{{p},\frac{{{q_1}}}{{{q_1} - p + 1}}}}(K \cap B_r)}}{{{r^{N - p}}}}} \right)}^{\frac{1}{{p - 1}}}}} \frac{{dr}}{r}.
\end{equation}
\end{theorem}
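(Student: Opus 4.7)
The strategy is to reduce the lower bound for $U_j(0)$ to two Wolff-potential estimates already present in Section 2: the \emph{upper} bound $v\le C\mathbf{W}_{1,p}[\mu]$ coming from the existence Theorem 2.8, and the \emph{lower} (Kilpel\"ainen--Mal\'y) bound given by Corollary 2.6. Both are applied to a carefully chosen measure $\mu$ concentrated on $K$ whose Wolff potential at $0$ recovers the Wiener integral on the right-hand side of \eqref{3hvinea} or \eqref{3hvineb}.

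\textbf{Construction of $\mu$.} For each dyadic scale $2^{-n}$, a Frostman-type duality---classical for the Hausdorff capacity $\mathcal{H}_1^{N-p}$ and its analogue (Adams--Hedberg) for the Bessel capacity $C_{p,q_1/(q_1-p+1)}$---yields a nonnegative Radon measure supported in $K \cap (B_{2^{-n+1}}\setminus B_{2^{-n}})$ whose total mass is comparable to the capacity of that annular slice of $K$ and which satisfies a uniform Morrey bound of order $N-p$. Summing and renormalising produces $\mu$ supported on $K$ with $\|\mathbf{M}_{p,1}[\mu]\|_{L^\infty}\le 1$ and with $\mathbf{W}_{1,p}^1[\mu](0)$ comparable, up to multiplicative constants, to the Wiener integral on the right-hand side.

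\textbf{Existence and comparison.} The Morrey bound on $\mu$ feeds into Theorem 2.3 to yield $\exp(C_1\mathbf{W}_{1,p}[\mu])\in L^1(B_R)$ on some ball $B_R\supset K$; in the power case it also delivers absolute continuity of $\mu$ with respect to the Bessel capacity required by Theorem 2.8(a) (this is where $q_1>N(p-1)/(N-p)$ enters). Theorem 2.8 then produces a nonnegative renormalised solution $v$ of $-\Delta_pv+g(v)=\mu$ in $B_R$ with $v=0$ on $\partial B_R$ and $v\le C\mathbf{W}_{1,p}^{2R}[\mu]$. Since $\mathrm{supp}\,\mu\subset K$, $v$ is a classical solution of the homogeneous equation on $K^c\cap B_R$; comparing with $U_j$ on each $G_n\nearrow K^c$ of the exhaustion used to define the maximal solution (where $U_j=+\infty$ on $\partial G_n$ while $v$ is finite there), the weak comparison principle gives $v\le U_j$ in $K^c$, hence $U_j(0)\ge v(0)$.

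\textbf{Main obstacle: the lower bound on $v(0)$.} Corollary 2.6 applied to $v$ with forcing $f:=g(v)$ yields
\begin{equation*}
v(0)\ \ge\ C\,\mathbf{W}_{1,p}^{R/4}[\mu](0)\ -\ \|g(v)\|_{\mathcal{M}^{N/(p-\varepsilon)}(B_R)}^{1/(p-1)}
\end{equation*}
for some $\varepsilon\in(0,p)$. The key technical work is to bound $\|g(v)\|_{\mathcal{M}^{N/(p-\varepsilon)}}$ by a universal constant. In the exponential case, the pointwise bound $g(v)\le \exp(C\mathbf{W}_{1,p}[\mu])-1$ combined with Theorem 2.3 (after an initial rescaling of $\mu$ so that the constant matches $C_1$) places $g(v)$ in $L^s\subset\mathcal{M}^s$ for every $s\ge 1$, in particular $s=N/(p-\varepsilon)$. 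In the power case, the restriction $q<pq_1/N$ combined with $\|\mathbf{M}_{p,1}[\mu]\|_{L^\infty}\le 1$ embeds $v^q$ into $\mathcal{M}^{N/(p-\varepsilon)}$ for some $\varepsilon\in(0,p)$ via a Morrey estimate for the Wolff potential. The resulting additive constant is absorbed into $C_1$, respectively $C_3$, and since $\mathbf{W}_{1,p}^{R/4}[\mu](0)$ is comparable to the Wiener integral on the right-hand side of \eqref{3hvinea} (respectively \eqref{3hvineb}), this yields the theorem.
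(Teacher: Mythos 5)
Your overall strategy is the same as the paper's: construct dyadic Frostman measures $\mu_j$ on $K\cap S_j$, form the sum $\mu=\sum_j\mu_j$, apply the existence theorem (Theorem 2.8) to get a solution $v$ with upper bound $v\le C\mathbf{W}_{1,p}[\mu]$, apply Corollary 2.6 to get the Wolff lower bound $v(0)\gtrsim\mathbf{W}_{1,p}[\mu](0)-\|g(v)\|_{\mathcal{M}^{N/(p-\varepsilon)}}^{1/(p-1)}$, and compare $v$ with the maximal solution. The key technical step that you correctly identify is controlling $\|g(v)\|_{\mathcal{M}^{N/(p-\varepsilon)}}$ uniformly in the number of dyadic scales.

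For the exponential case your route is valid and in fact cleaner than the paper's. You claim $\|\mathbf{M}_{p,1}[\mu]\|_{L^\infty}\le 1$ after renormalisation of the sum; this is true but not entirely obvious (it uses that the $\mu_j$ have disjoint annular supports, each with $\|\mathbf{M}_{p,1}[\mu_j]\|_{L^\infty}\le 1$ by Turesson's theorem, so at small scales only a bounded number of annuli meet $B_t(x)$ and at larger scales $\sum_k r_{k-1}^{N-p}\lesssim t^{N-p}$ by geometric decay), and it should be verified explicitly. Given that, one application of Theorem 2.3 on a fixed ball yields $\exp(\varepsilon\mathbf{W}_{1,p}^1[\mu])\in L^1(B_1)$ with constants independent of $J$, which replaces the paper's more elaborate decomposition $A=A_1+A_2+A_3$. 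Both routes give the same conclusion.

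For the power case there is a genuine gap. The measures $\mu_j$ coming from Adams--Hedberg duality for the Bessel capacity $C_{p,q_1/(q_1-p+1)}$ do \emph{not} satisfy $\|\mathbf{M}_{p,1}[\mu_j]\|_{L^\infty}\le 1$: the natural Morrey bound for a capacitary measure of $C_{p,s}$ with $s=q_1/(q_1-p+1)>1$ is of order $N-ps<N-p$, so $\mu_j(B_t(x))/t^{N-p}$ can blow up as $t\to0$ (e.g.\ when $K\cap S_j$ is an $(N-ps)$-dimensional set). Hence Theorem 2.3 cannot be invoked, and the sentence ``the restriction $q<pq_1/N$ combined with $\|\mathbf{M}_{p,1}[\mu]\|_{L^\infty}\le 1$ embeds $v^q$ into $\mathcal{M}^{N/(p-\varepsilon)}$ via a Morrey estimate for the Wolff potential'' does not stand: there is no such Morrey estimate for this $\mu$. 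What is needed instead, and what the paper does, is a direct bound on $\int_{B_1}(\mathbf{W}_{1,p}^1[\sum_k\mu_k])^{q_1}\,dx$: apply the discrete Jensen/weighted-power inequality $(\sum a_k)^s\le\sum\theta_{k,s}a_k^s$ to disentangle the sum, then use the Wolff--Bessel estimate $\int(\mathbf{W}_{1,p}[\mu_k])^{q_1}\lesssim\int(G_p*\mu_k)^{q_1/(p-1)}=C_{p,q_1/(q_1-p+1)}(K\cap S_k)$ from \cite[Theorem~2.3]{33BVHV}, bound this by the capacity of a ball of radius $r_{k-1}$, and sum using $q_1>N(p-1)/(N-p)$ to ensure geometric decay. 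Only then does $\|v^q\|_{\mathcal{M}^{q_1/q}(B_1)}\le\|\mathbf{W}_{1,p}^1[\mu]\|_{L^{q_1}(B_1)}^q$ with $q_1/q>N/p$, which places $v^q$ in the required Morrey class. Your argument needs to be replaced by this (or an equivalent) chain in the power case.
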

\begin{proof}
{\it 1}. For $j\in \mathbb{Z}$ define $r_j=2^{-j}$ and $S_j=\{x:r_j\le |x|\le r_{j-1}\}$, $B_j=B_{r_j}$. Fix a positive integer $J$ such that $K\subset\{x:r_{J}\le |x|<1/8\}$. Consider the sets $K\cap S_j$ for $j=3,...,J$. By \cite[Theorem 3.4.27]{33TU}, there exists $\mu_j\in \mathfrak{M}^+(\mathbb{R}^N)$
such that $\text{supp}(\mu_j) \subset K\cap S_j$, ${\left\| {{{\bf M}_{p,1}}{[\mu _j]}} \right\|_{{L^\infty }(\mathbb{R}^N)}} \le 1$ and
  $$c_1^{-1}\mathcal{H}_1^{N-p}(K\cap S_j)\le \mu_j(\mathbb{R}^N)\le c_1\mathcal{H}_1^{N-p}(K\cap S_j)~~~~\forall j,$$ 
  for some $c_1=c_1(N,p)$.\\
 Now, we will show that for $\varepsilon=\varepsilon(N,p)>0$ small enough, there holds,
 \begin{equation}
\label{3hv06051}A:=\int_{B_1}\exp\left(\varepsilon{\bf W}^{1}_{1,p}\left[\sum_{k=3}^{J} \mu_{k}\right](x)\right) dx\leq c_2,
 \end{equation}
 where  $c_2$ does not depend on $J$.  \\
 Indeed, define $\mu_j\equiv 0$ for all $j\geq J+1$ and $j\le 2$. We have 
 \[A= \sum\limits_{j = 1}^\infty  {\int_{{S_j}} {\exp } \left( {{\varepsilon }{\bf W}_{1,p}^1\left[\sum\limits_{k = 3}^J {{\mu _k}} \right](x)} \right)dx}. \]
 Since for any $j$ \[{\bf W}_{1,p}^1\left[\sum\limits_{k = 3}^J {{\mu _k}} \right] \le c(p)  {{\bf W}_{1,p}^1\left[\sum\limits_{k \geq j + 2} {{\mu _k}} \right] + c(p){\bf W}_{1,p}^1\left[\sum\limits_{k \le j - 2} {{\mu _k}} \right]+ c(p)\sum\limits_{k = \max \{ j - 1,3\} }^{j + 1} {{\bf W}_{1,p}^1[{\mu _k}]} }, \] with $c(p)= \max \{ 1,{5^{\frac{{2 - p}}{{p - 1}}}}\}$  and $\exp (\sum\limits_{i = 1}^5 {{a_i}} ) \le \sum\limits_{i = 1}^5 {\exp (5{a_i})}$ for all $a_i.$
 Thus,
 \begin{align*}
   A &\leq \sum\limits_{j = 1}^\infty  {\myint{{S_j}}{} {\exp } \left( {c_3{\varepsilon }{\bf W}_{1,p}^1\left[\sum\limits_{k \geq j + 2} {{\mu _k}} \right](x)} \right)dx} + \sum\limits_{j = 1}^\infty  {\myint{{S_j}}{} {\exp } \left( {c_3{\varepsilon}{\bf W}_{1,p}^1\left[\sum\limits_{k \le j - 2} {{\mu _k}}\right ](x)} \right)dx}\\&~+\sum\limits_{j = 1}^\infty  {\sum\limits_{k = \max (j - 1,3)}^{j + 1} {\myint{{S_j}}{} {\exp } \left( {c_3{\varepsilon }{\bf W}_{1,p}^1[{\mu _k}](x)} \right)dx} } :=A_1+A_2+A_3, \text{ with } c_3=5c(p).
 \end{align*} 
 {\it Estimate of $A_3$}:
   We apply Theorem \ref{3hvth2} for $\mu= \mu_{k}$ and $B=B_{k-1}$,\[\int_{2{B_{k - 1}}} \exp\left( { c_3{\varepsilon }{\bf W}_{1,p}^1[{\mu _k}](x)} \right)dx \le c_4r_{k-1}^N\] 
   with $c_3{\varepsilon } \in (0,C_1]$,  the constant $C_1$ is in Theorem \ref{3hvth2}.
    In particular, 
    \[\int_{S_j} \exp\left( {c_3 {\varepsilon }{\bf W}_{1,p}^1[{\mu _k}](x)} \right)dx \le {c_4}r_{k-1}^N~\text{ for } k=j-1,j,j+1,\]   
    which implies
    \begin{equation}\label{3hvI3}
    A_3\le c_5\sum_{j=1}^{+\infty} r_j^N=c_5<\infty. 
    \end{equation}
  {\it  Estimate of $A_1$}:  Since $\sum\limits_{k \geq j + 2} {{\mu _k}} \left( {{B_t}(x)} \right) = 0$ for all $x \in {S_j},t\in(0,{r_{j + 1}})$.  Thus, 
  \begin{align*}
  {A_1} &= \sum\limits_{j = 1}^\infty  {\int_{{S_j}} {\exp } \left( {c_3{\varepsilon }\int\limits_{{r_{j + 1}}}^1 {{{\left( {\frac{{\sum\limits_{k \geq j + 2} {{\mu _k}} ({B_t}(x))}}{{{t^{N - p}}}}} \right)}^{\frac{1}{{p - 1}}}}\frac{{dt}}{t}} } \right)dx}\\&\le \sum\limits_{j = 1}^\infty  { {\exp } \left( {c_3{\varepsilon}\frac{{p - 1}}{{N - p}}{{\left( {\sum\limits_{k \geq j + 2} {{\mu _k}({S_k})} } \right)}^{\frac{1}{{p - 1}}}}r_{j + 1}^{ - \frac{{N - p}}{{p - 1}}}} \right)}|S_j|.
  \end{align*}
    Note that ${\mu _k}({S_k}) \le {\mu _k}({B_{{r_{k - 1}}}}(0)) \leq r_{k - 1}^{N - p}$, which leads to
    \[{\left( {\sum\limits_{k \geq j + 2} {{\mu _k}({S_k})} } \right)^{\frac{1}{{p - 1}}}}r_{j + 1}^{ - \frac{{N - p}}{{p - 1}}} \le {\left( {\sum\limits_{k \geq j + 2} {r_{k - 1}^{N - p}} } \right)^{\frac{1}{{p - 1}}}}r_{j + 1}^{ - \frac{{N - p}}{{p - 1}}} = {\left( {\sum\limits_{k \geq 0} {r_k^{N - p}} } \right)^{\frac{1}{{p - 1}}}} = {\left( {\frac{1}{{1 - {2^{ - (N - p)}}}}} \right)^{\frac{1}{{p - 1}}}}.\]
Therefore
\begin{equation}\label{3hvI1}
 {A_1} \le\exp \left( {c_3{\varepsilon }\frac{{p - 1}}{{N - p}}{{\left( {\frac{1}{{1 - {2^{ - (N - p)}}}}} \right)}^{\frac{1}{{p - 1}}}}} \right)|{B_1}| = {c_6}. 
\end{equation}
  {\it   Estimate of $A_2$}: for $x\in S_{j}$, 
      \[ \displaystyle{\bf W}_{1,p}^1\left[\sum\limits_{k \le j - 2} {{\mu _k}} \right](x) = \displaystyle \int\limits_{{r_{j - 1}}}^1 {{{\left( {\frac{{\sum\limits_{k \le j - 2} {{\mu _k}({B_t}(x))} }}{{{t^{N - p}}}}} \right)}^{\frac{1}{{p - 1}}}}\frac{{dt}}{t}}  = \sum\limits_{i = 1}^{j - 1} {\int\limits_{{r_i}}^{{r_{i - 1}}} {{{\left( {\frac{{\sum\limits_{k \le j - 2} {{\mu _k}({B_t}(x))} }}{{{t^{N - p}}}}} \right)}^{\frac{1}{{p - 1}}}}\frac{{dt}}{t}} }. \]
      Since ${r_i} < t < {r_{i - 1}}$, $\sum\limits_{k \le i - 2} {{\mu _k}({B_t}(x))}  = 0,\forall i = 1,...,j - 1$, thus
      \begin{align*}
      {\bf W}_{1,p}^1\left[\sum\limits_{k \le j - 2} {{\mu _k}} \right](x) &= \sum\limits_{i = 1}^{j - 1} {\int\limits_{{r_i}}^{{r_{i - 1}}} {{{\left( {\frac{{\sum\limits_{k=i-1}^{j-2} {{\mu _k}({B_t}(x))} }}{{{t^{N - p}}}}} \right)}^{\frac{1}{{p - 1}}}}\frac{{dt}}{t}} } \le \sum\limits_{i = 1}^{j - 1} {\int\limits_{{r_i}}^{{r_{i - 1}}} {{{\left( {\frac{{\sum\limits_{k=i-1}^{j - 2} {{\mu _k}({S_k})} }}{{{t^{N - p}}}}} \right)}^{\frac{1}{{p - 1}}}}\frac{{dt}}{t}} } \\&\le\sum\limits_{i = 1}^{j - 1} {{{\left( {\sum\limits_{k=i-1}^{j-2} {r_{k - 1}^{N - p}} } \right)}^{\frac{1}{{p - 1}}}}} r_i^{ - \frac{{N - p}}{{p - 1}}} \le c_{7}j, \text{ with } c_{7}={\left( {\frac{{{4^{N - p}}}}{{1 - {2^{ - (N - p)}}}}} \right)^{\frac{1}{{p - 1}}}}.
      \end{align*}
     Therefore, 
     \begin{align}\nonumber
      {A_2} &\le \sum\limits_{j = 1}^\infty  {\int_{{S_j}} {\exp } \left( {c_3 c_7\varepsilon j} \right)dx}  = \sum\limits_{j = 1}^\infty  {r_j^N\exp \left( {c_3c_{7}{\varepsilon }j} \right)|S_1|}\\&\label{3hvI2}=\sum\limits_{j = 1}^\infty  {\exp \left( {\left( {c_3c_{7}\varepsilon  - N\log (2)} \right)j} \right)|S_1|} \leq c_{8}  ~~\textrm{ for}~~  \varepsilon \leq N\log (2)/(2c_3c_7).
     \end{align}    
Consequently, from \eqref{3hvI1}, \eqref{3hvI2} and \eqref{3hvI3}, we obtain $A\le c_2:=c_6+c_{8}+c_5$ for $\varepsilon=\varepsilon(N,p)$ small enough. This implies 
\begin{equation}\label{3hv06052}
       {\left\| {\exp \left( {\frac{p}{2N}\varepsilon{\bf W}_{1,p}^1\left[\sum\limits_{k = 3}^J { {\mu _k}} \right]} \right)} \right\|_{\mathcal{M}^{\frac{2N}{p}}(B_1)}} \le {c_{9}}{\left( {\int_{{B_1}} {\exp \left( {\varepsilon{\bf W}_{1,p}^1\left[\sum\limits_{k = 3}^J { {\mu _k}} \right](x)} \right)}}dx \right)^{\frac{p}{{2N}}}} \le {c_{10}},
       \end{equation}
       where the constant $c_{10}$ does not depend on $J$. 
Set $B=B_{\frac{1}{4}}$. For $\varepsilon_0=(\frac{p\varepsilon}{2NC})^{1/(p-1)}$, where $C$ is the constant in \eqref{3hvine},  by Theorem \ref{3hv06054} and estimate \eqref{3hv06052},  there exists a nonnegative renormalized solution $u$ to equation 
\begin{equation*}
 \begin{array}{ll}
  - \Delta_p u + e^{ u}-1 =\varepsilon_0 \sum_{j=3}^{J} \mu_{j} \qquad&\text{in }\;B, 
  \\ \phantom{- \Delta_p  + e^{u}-1}
 u = 0&\text{in }\;\partial B,  \\ 
 \end{array} 
\end{equation*} 
satisfying \eqref{3hvine} with $\mu=\varepsilon_0 \sum_{j=3}^{J} \mu_{j}$. 
Thus, from Corollary \ref{3hvcoro1} and estimate \eqref{3hv06052}, we have 
\begin{equation*}
u(0)\geq -c_{11}+c_{12}\mathbf{W}^{\frac{1}{4}}_{1,p}\left[\sum_{j=3}^{J} \mu_{j}\right](0).
\end{equation*}
 Therefore
 \begin{align*}
  u(0) &\geq - {c_{11}} + {c_{12}}\sum\limits_{i = 2}^\infty  {\int\limits_{{r_{i + 1}}}^{{r_i}} {{{\left( {\frac{{\sum\limits_{j = 3}^J {{\mu _j}} ({B_t}(0))}}{{{t^{N - p}}}}} \right)}^{\frac{1}{{p - 1}}}}\frac{{dt}}{t}} }\geq  - {c_{11}} + {c_{12}}\sum\limits_{i = 2}^{J - 2} {\int\limits_{{r_{i + 1}}}^{{r_i}} {{{\left( {\frac{{{\mu _{i + 2}}({B_t}(0))}}{{{t^{N - p}}}}} \right)}^{\frac{1}{{p - 1}}}}\frac{{dt}}{t}} }  \\&=  
      - {c_{11}} + {c_{12}}\sum\limits_{i = 2}^{J - 2} {\int\limits_{{r_{i + 1}}}^{{r_i}} {{{\left( {\frac{{{\mu _{i + 2}}({S_{i + 2}})}}{{{t^
    {N - p}}}}} \right)}^{\frac{1}{{p - 1}}}}\frac{{dt}}{t}} }\geq   - {c_{11}} + {c_{13}}\sum\limits_{i = 2}^{J - 2} {{{\left( {{\cal H}_1^{N - p}(K \cap {S_{i + 2}})} \right)}^{\frac{1}{{p - 1}}}}r_i^{ - \frac{{N - p}}{{p - 1}}}}  \\&= 
       - {c_{11}} + {c_{13}}\sum\limits_{i = 4}^\infty  {{{\left( {{\cal H}_1^{N - p}(K \cap {S_i})} \right)}^{\frac{1}{{p - 1}}}}r_i^{ - \frac{{N - p}}{{p - 1}}}}.  
 \end{align*}
 From the inequality 
  $${\left( {{\cal H}_1^{N - p}(K \cap {S_i})} \right)^{\frac{1}{{p - 1}}}} \geq {\textstyle{1 \over {\max (1,{2^{\frac{{2 - p}}{{p - 1}}}})}}}{\left( {{\cal H}_1^{N - p}(K \cap {B_{i - 1}})} \right)^{\frac{1}{{p - 1}}}} - {\left( {{\cal H}_1^{N - p}(K \cap {B_i})} \right)^{\frac{1}{{p - 1}}}}\quad\forall i,$$
 we deduce that
 \begin{align*}
  u(0) &\geq  - {c_{11}} + {c_{13}}\sum\limits_{i = 4}^\infty  {\left( {{\textstyle{1 \over {\max (1,{2^{\frac{{2 - p}}{{p - 1}}}})}}}{{\left( {{\cal H}_1^{N - p}(K \cap {B_{i - 1}})} \right)}^{\frac{1}{{p - 1}}}} - {{\left( {{\cal H}_1^{N - p}(K \cap {B_i})} \right)}^{\frac{1}{{p - 1}}}}} \right)r_i^{ - \frac{{N - p}}{{p - 1}}}}  
    \\&  \geq  - {c_{11}} + {c_{13}}\left( {{\textstyle{{{2^{\frac{{N - p}}{{p - 1}}}}} \over {\max (1,{2^{\frac{{2 - p}}{{p - 1}}}})}}} - 1} \right)\sum\limits_{i = 4}^\infty  {{{\left( {{\cal H}_1^{N - p}(K \cap {B_i})} \right)}^{\frac{1}{{p - 1}}}}r_i^{ - \frac{{N - p}}{{p - 1}}}} 
    \\&  \geq  - {c_{14}} + {c_{15}}\int\limits_0^1 {{{\left( {\frac{{{\cal H}_1^{N - p}(K \cap {B_t})}}{{{t^{N - p}}}}} \right)}^{\frac{1}{{p - 1}}}}\frac{{dt}}{t}}.
 \end{align*}
Since $U_1$ is the maximal solution in $K^c$, $u$ satisfies the same equation in $B\backslash K$  and $U_1\geq u=0$ on $\partial B$, it follows that $U_1$ dominates $u$ in $B\backslash K$.  Then $U_1(0)\geq u(0) $ and we obtain \eqref{3hvinea}.\smallskip

\noindent{\it 2}. By \cite[Theorem 2.5.3]{33AH}, there exists $\mu_j\in \mathfrak{M}^+(\mathbb{R}^N)$
such that $\text{supp} (\mu_j) \subset K\cap S_j$ and  \[{\mu _j}(K \cap {S_j}) = \int\limits_{\mathbb{R}^N} {{{\left( {{G_p}[{\mu _j}](x)} \right)}^{\frac{q_1}{p-1}}}}dx  = C_{{p,\frac{{{q_1}}}{{{q_1} - p + 1}}}}(K \cap {S_j}).\]
By Jensen's inequality, we have for any $a_k\geq 0$,  
 \[{\left( {\sum\limits_{k = 0}^\infty  {{a_k}} } \right)^s} \le \sum\limits_{k = 0}^\infty  {{\theta _{k,s}}a_k^s} \]
 where $\theta _{k,r}$ has the following expression with $\theta>0$,  
 \[{\theta _{k,s}} = \left\{ \begin{array}{lll}
  1\quad &\text {if} \;s \in (0,1], \\ 
  {\left( {\frac{{\theta  + 1}}{\theta }} \right)^{s - 1}}{\left( {\theta  + 1} \right)^{k(s-1)}}\quad &\text {if} \;s  > 1. \\ 
  \end{array} \right.\]
  Thus, 
  \begin{align*}
   \int\limits_{{B_1}} {{{\left( {{\bf W}_{1,p}^1\left[\sum\limits_{k = 3}^J {{\mu _k}} \right](x)} \right)}^{{q_1}}}}dx&\leq \int_{B_1}\left( \sum_{k=3}^{J}\theta_{k,\frac{1}{p-1}}{\bf W}_{1,p}^1[{\mu _k}](x)\right)^{q_1}dx   \\&\leq \sum\limits_{k = 3}^J {\theta _{k,\frac{1}{{p - 1}}}^{{q_1}}{\theta _{k,{q_1}}}\int\limits_{{B_1}} {{{\left( {{\bf W}_{1,p}^1[{\mu _k}](x)} \right)}^{{q_1}}}} dx }  
        \\&
      \leq {c_{16}}\sum\limits_{k = 3}^J {\theta _{k,\frac{1}{{p - 1}}}^{{q_1}}{\theta _{k,{q_1}}}\int\limits_{\mathbb{R}^N} {{{\left( {{G_p}*{\mu _k}(x)} \right)}^{{\frac{q_1}{p-1}}}}} dx }     \\&
      = {c_{16}}\sum\limits_{k = 3}^J {\theta _{k,\frac{1}{{p - 1}}}^{{q_1}}{\theta _{k,{q_1}}}C_{p,\frac{{{q_1}}}{{{q_1} - p + 1}}}(K \cap {S_k})}     \\ &
      \le {c_{17}}\sum\limits_{k = 3}^J {\theta _{k,\frac{1}{{p - 1}}}^{{q_1}}{\theta _{k,{q_1}}}{2^{ - k\left( {N - \frac{{p{q_1}}}{{{q_1} - p + 1}}} \right)}}}      \\&
      \le {c_{18}}, 
  \end{align*} 
for $\theta$ small enough. Here the third inequality follows from \cite[Theorem 2.3]{33BVHV} and the constant $c_{18}$ does not depend on $J$. 
   Hence, \begin{equation}
  \label{3hv06053}{\left\| {\left( {{\bf W}_{1,p}^1\left[\sum\limits_{k = 3}^J {{\mu _k}} \right]} \right)^q} \right\|_{{{\cal M}^{\frac{{{q_1}}}{q}}}({B_1})}} \le {c_{19}}{\left\| {{\bf W}_{1,p}^1\left[\sum\limits_{k = 3}^J {{\mu _k}}\right]} \right\|_{{L^{{q_1}}}({B_1})}^q} \le {c_{20}},
   \end{equation}
   where $c_{20}$ is independent of $J$.
Take $B=B_{\frac{1}{4}}$. Since $\sum_{j=3}^{J} \mu_{j}$ is absolutely continuous with respect to the capacity $C_{p,\frac{q}{q+1-p}}$ in $B$, thus by Theorem \ref{3hv06054}, there exists a nonnegative renormalized solution $u$ to equation 
\begin{equation*}
\begin{array}{lll}
  - \Delta_p u + u^q= \sum_{j=3}^{J} \mu_{j} ~~&\text{ in }\,B, \\
  \phantom{  - \Delta_p  + u^q} 
 u = 0 &\text{ on }\,\partial B. 
 \end{array} 
\end{equation*} 
satisfying \eqref{3hvine} with $\mu= \sum_{j=3}^{J} \mu_{j}$. 
Thus, from Corollary \ref{3hvcoro1} and estimate \eqref{3hv06053}, we have
  $$u(0)\geq -c_{21}+c_{22}\mathbf{W}^{\frac{1}{4}}_{1,p}\left[\sum_{j=3}^{J} \mu_{j}\right](0).$$
As above, we also get that 
\begin{equation*}
u(0)\geq -c_{23}+c_{24}\int_0^1 {{{\left( {\frac{{C_{{p,\frac{{{q_1}}}{{{q_1} - p + 1}}}}(K \cap B_r)}}{{{r^{N - p}}}}} \right)}^{\frac{1}{{p - 1}}}}} \frac{{dr}}{r}.
\end{equation*}
After we also have $U_2(0)\geq u(0)$. Therefore, we obtain\eqref{3hvineb}.
\end{proof}


\section{Proof of the main results}
First, we prove theorem \textbf{1} in the case case $p=N$. To do this we consider the function
\begin{align*}
x\mapsto U(x)=U(|x|)=\log\left(\frac{N-1}{2^{N+1}}\frac{1}{R^N}\left(\frac{R}{|x|}+1\right)\right)~~\text{ in }~ B_R(0)\backslash\{0\}.
\end{align*} One has 
\begin{align*}
U^{'}(|x|)=\frac{1}{R+|x|}-\frac{1}{|x|}~~\text{ and }~U^{''}(|x|)=-\frac{1}{(R+|x|)^2}+\frac{1}{|x|^2},
\end{align*}
thus, for any $0<|x|<R$,
\begin{align*}
-\Delta_NU +e^U-1&=-(N-1)|U^{'}(|x|)|^{N-2}\left(U^{''}(|x|)+\frac{1}{|x|}U^{'}(|x|)\right) +e^U-1\\&=-\frac{(N-1)R^{N-1}}{(R+|x|)^N|x|^{N-1}}+\frac{N-1}{2^{N+1}}\frac{1}{R^N}\left(\frac{R}{|x|}+1\right)-1
\\& \leq -\frac{(N-1)R^{N-1}}{(2R)^N|x|^{N-1}}+\frac{N-1}{2^{N+1}}\frac{1}{R^N}\frac{2R}{|x|}
\\& \leq -1.
\end{align*}
Hence, if $u\in C^1(\Omega)$ is the maximal solution of
 \begin{equation*}
  -\Delta_N u +e^{u}-1=0~~\text{in }\Omega
  \end{equation*}
  and $R=2\text{diam}(\Omega)$,
 then 
  $u(x)\geq U(|x-y|)$ for any $x\in\Omega$ and $y\in\partial\Omega$. Therefore, $u$ is a large solution and satisfies 
  \begin{align*}
  u(x)\geq \log\left(\frac{N-1}{2^{N+1}}\frac{1}{R^N}\left(\frac{R}{\rho(x)}+1\right)\right)~~\forall ~x\in\Omega.
  \end{align*}
  Now, we prove Theorem \textbf{1} in the case $p<N$ and Theorem \textbf{2}.
 Let $u,v\in C^1(\Omega)$ be the maximal solutions of
 $$\begin{array} {lll}
&(i)\qquad -\Delta_p u +e^{u}-1=0\qquad&\text{in }\Omega,\\[2mm]
\phantom{-1}
 &(ii)\qquad  -\Delta_p v +v^q=0\qquad&\text{in }\Omega.
\end{array}$$
Fix $x_0\in \partial \Omega $. We can assume that $x_0=0$. Let $\delta\in (0,1/12)$. For $z_0\in \overline{B}_\delta\cap \Omega $. Set $K=\Omega^c\cap\overline{B_{1/4}(z_0)}$. Let $U_1,U_2\in C^{1}(K^c)$  be the maximal solutions of \eqref{3hvb-1} and \eqref{3hvb-2} respectively.  We have $u\geq U_1$ and $v \geq U_2$ in $\Omega$. By Theorem \ref{3hvth3}, 
$$\begin{array} {ll}\displaystyle
U_1(z_0)\geq -c_1+c_2\int_{\delta}^{1}\left(\frac{\mathcal{H}_1^{N-p}(K\cap B_r(z_0))}{r^{N-p}}\right)^{\frac{1}{p-1}}\frac{dr}{r}
\\\displaystyle\phantom{U(z_0)}
\geq   -c_1+c_2\int_{\delta}^{1}\left(\frac{\mathcal{H}_1^{N-p}(K\cap B_{r-|z_0|})}{r^{N-p}}\right)^{\frac{1}{p-1}}\frac{dr}{r} \quad(\textrm{since } B_{r-|z_0|}\subset B_r(z_0)))
\\\displaystyle\phantom{U(z_0)}
\geq   -c_1+c_2\int_{2\delta}^{1}\left(\frac{\mathcal{H}_1^{N-p}(K\cap B_{\frac{r}{2}})}{r^{N-p}}\right)^{\frac{1}{p-1}}\frac{dr}{r}
\\\displaystyle\phantom{U(z_0)}
\geq   -c_1+c_3\int_{\delta}^{1/2}\left(\frac{\mathcal{H}_1^{N-p}(K\cap B_r)}{r^{N-p}}\right)^{\frac{1}{p-1}}\frac{dr}{r}.
\end{array}$$
We deduce 
$$\displaystyle\inf_{B_\delta\cap \Omega}u\geq\inf_{B_\delta\cap \Omega}U_1\geq -c_1+c_3\int_{\delta}^{1/2}\left(\frac{\mathcal{H}_1^{N-p}(K\cap B_r)}{r^{N-p}}\right)^{\frac{1}{p-1}}\frac{dr}{r}\to \infty\quad\text{as }\delta \to 0.$$ 
Similarly, we also obtain 
$$\displaystyle\inf_{B_\delta\cap \Omega}v\geq -c_4+c_5\int_{\delta}^{1/2}\left(\frac{C_{p,\frac{q_1}{q_1-p+1}}(K\cap B_r)}{r^{N-2}}\right)^{\frac{1}{p-1}}\frac{dr}{r}\to \infty\quad\text{as }\delta \to 0.$$ 
Therefore, $u$ and $v$ satisfy \eqref{3hvIn9} and  \eqref{3hvIn8} respectively. This completes the proof. \medskip

\section{Large solutions of quasilinear Hamilton-Jacobi equations}

Let $\Omega$ be a bounded open subset of $\mathbb{R}^N$ with $N\geq 2$. In this section we use our previous results to give sufficient conditions for existence of solutions to the problem
  \begin{equation} \label{3hvHJ}\begin{array}{ll}
  -\Delta_pu+a\abs{\nabla u}^q+bu^{s}=0~~&\text{in }\Omega,\\
 \! \displaystyle\phantom{-,,,---}\lim_{\rho(x)\to 0}u(x)=\infty,
\end{array} \end{equation}
where $a>0,b> 0$ and  $1\leq q<p\leq 2$, $q>p-1, s\geq p-1$. 
\smallskip

First we have the result of existence solutions to equation \eqref{3hvHJ}. 
  
  \begin{proposition}
   \label{3hvmaxso} Let $a>0,b>0$ and $q>p-1, s\geq p-1$, $1\le q\le p$ and $1<p\le 2$. There exists a maximal nonnegative solution $u\in {C^1}(\Omega ) $ to equation
    \begin{eqnarray}\label{3hvHJ+}
    &&-\Delta_p u+a\abs{\nabla u}^q+bu^s=0~~\text{in }\Omega,
    \label{3hveqma}
    \end{eqnarray} 
     which satisfies
    \begin{equation}\label{3hv4}
    u(x)\leq c(N,p,s)b^{-\frac{1}{s-p+1}}d(x,\partial \Omega)^{-\frac{p}{s-p+1}}~~ \forall x\in \Omega,
    \end{equation}
    if $s>p-1$,

    \begin{equation}\label{3hv5}
    u(x)\leq c(N,p,q)\left(a^{-\frac{1}{q-p+1}}d(x,\partial \Omega)^{-\frac{p-q}{q-p+1}}+a^{-\frac{1}{q-p+1}}b^{-\frac{1}{p-1}}d(x,\partial \Omega)^{-\frac{q}{(p-1)(q-p+1)}}\right) ~~\forall x\in \Omega,
    \end{equation}
    if  $p-1<q<p$ and $s=p-1$,
    and 
    \begin{equation}\label{3hv6}
    u(x)\leq c(N,p)a^{-1}b^{-\frac{1}{p-1}}d(x,\partial \Omega)^{-\frac{p}{p-1}}~~ \forall x\in \Omega,
    \end{equation}
    if $q=p$ and $s=p-1$.
  \end{proposition}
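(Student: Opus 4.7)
The statement has two parts: existence of a maximal solution, and three pointwise upper bounds. I would treat existence once for all three parameter regimes and then handle the estimates case by case by producing explicit radial supersolutions.

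\textbf{Step 1: construction of the maximal solution.} Exhaust $\Omega$ by an increasing sequence of smooth subdomains $\Omega_n \Subset \Omega_{n+1} \Subset \Omega$. In each $\Omega_n$, for every $k \in \mathbb{N}$, solve the Dirichlet problem
\begin{equation*}
-\Delta_p v_{n,k}+a|\nabla v_{n,k}|^q+bv_{n,k}^s=0\ \text{in }\Omega_n,\qquad v_{n,k}=k\ \text{on }\partial\Omega_n.
\end{equation*}
Existence and uniqueness of $v_{n,k}\in C^{1,\alpha}(\overline{\Omega_n})$ follow from monotone iteration (the constant $k$ is a supersolution, $0$ is a subsolution) together with the $C^{1,\alpha}$ regularity theory of Lieberman, valid for $1<p\le 2$, $1\le q\le p$, $s\ge p-1$. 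The comparison principle, which applies under our structural conditions, gives monotonicity $v_{n,k}\le v_{n,k+1}$, and the barriers of Step~2 provide a local-in-$\Omega_n$ upper bound independent of $k$. Hence $v_n:=\lim_{k\to\infty}v_{n,k}$ is a classical $C^1$ solution in $\Omega_n$ with $v_n(x)\to\infty$ as $x\to\partial\Omega_n$. Because $v_{n+1}$ is bounded on $\partial\Omega_n\subset\Omega_{n+1}$ while $v_n=+\infty$ there, comparison yields $v_{n+1}\le v_n$ in $\Omega_n$. The decreasing limit $u:=\lim_n v_n$ is then locally bounded (again by Step~2), so standard $C^{1,\alpha}$ estimates let us pass to the limit and obtain $u\in C^1(\Omega)$ solving~\eqref{3hveqma}. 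Maximality: if $w$ is any nonnegative solution in $\Omega$, then $w$ is bounded on each $\partial\Omega_n$ while $v_n=+\infty$ there, so $w\le v_n$, hence $w\le u$.

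\textbf{Step 2: barrier estimates.} Fix $x_0\in\Omega$ and set $\rho=d(x_0,\partial\Omega)$. In $B_{\rho}(x_0)$ I look for radial supersolutions of the form $V(x)=A(\rho-|x-x_0|)^{-\beta}$ with $A,\beta>0$ to be tuned. One computes $|\nabla V|=A\beta(\rho-|x-x_0|)^{-\beta-1}$ and $-\Delta_pV$ up to lower-order terms behaves like $C(N,p)A^{p-1}\beta^{p-1}(\beta+1)(\rho-|x-x_0|)^{-(\beta+1)(p-1)-1}$, plus curvature corrections of the same order. Matching scalings in each case:

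\emph{Case $s>p-1$.} Choose $\beta=p/(s-p+1)$, so $s\beta=(\beta+1)(p-1)+1$ and the absorption term $bV^s$ balances $-\Delta_p V$. For $A=c(N,p,s)b^{-1/(s-p+1)}$ large enough, the term $bV^s$ dominates $-\Delta_pV$ and $a|\nabla V|^q$ (note $q\beta<s\beta$ since $q\le p<s+1$), giving $-\Delta_p V+a|\nabla V|^q+bV^s\ge 0$. By comparison in $B_\rho(x_0)\setminus\{x_0\}$ and removability of the isolated singularity, $u(x_0)\le V(x_0)=A\rho^{-p/(s-p+1)}$, which is~\eqref{3hv4}.

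\emph{Case $p-1<q<p$, $s=p-1$.} Take the sum of two barriers $V=V_1+V_2$ with exponents $\beta_1=(p-q)/(q-p+1)$ and $\beta_2=q/((p-1)(q-p+1))$: the first makes $a|\nabla V_1|^q$ balance $-\Delta_pV_1$, the second makes $bV_2^{p-1}$ balance $-\Delta_pV_2$. Tuning the two amplitudes yields the two-term estimate~\eqref{3hv5}.

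\emph{Case $q=p$, $s=p-1$.} The exponential substitution $u=(p-1)a^{-1/(p-1)}\log w$ transforms the equation into
\begin{equation*}
-\Delta_p w+\tilde b\, w^{p-1}=0,\qquad \tilde b=\frac{b}{(p-1)^{p-1}}a^{-1},
\end{equation*}
for which the standard barrier $w(x)\le c(N,p)\tilde b^{-1/p}(\rho-|x-x_0|)^{-p/(p-1)}$ translates back into~\eqref{3hv6}.

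\textbf{Main obstacle.} The algebra of each barrier is straightforward; the delicate points are (i) verifying that the comparison principle applies when one side blows up on the boundary of the ball and, at the center, has a removable singularity — this needs the local $C^{1,\alpha}$ theory and absorption of any isolated singularity by the superlinear terms — and (ii) ensuring that the compound barrier in the mixed case $p-1<q<p$, $s=p-1$ remains a supersolution after summing, which requires controlling the cross terms in $-\Delta_p(V_1+V_2)$ using the restrictions $1<p\le 2$ (so that $x\mapsto|x|^{p-2}x$ is monotone with the right convexity) to reduce to separate checks on $V_1$ and $V_2$.
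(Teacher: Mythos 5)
Your overall architecture (exhaustion by smooth $\Omega_n$, boundary data $k\to\infty$, monotonicity from the comparison principle for $1<p\le 2$, $1\le q\le p$, then $n\to\infty$, maximality by comparison) is exactly the paper's. The gap is in Step~2, the barrier construction, and it is not cosmetic.

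The radial function $V(x)=A\bigl(\rho-|x-x_0|\bigr)^{-\beta}$ is \emph{not} a supersolution of \eqref{3hveqma} near $x_0$. Writing $r=|x-x_0|$, one has $V'(r)=A\beta(\rho-r)^{-\beta-1}$, so $V'(0)=A\beta\rho^{-\beta-1}>0$. In the radial expression
\[
-\Delta_pV=-(p-1)|V'|^{p-2}V''-\frac{N-1}{r}\,|V'|^{p-2}V',
\]
the second term behaves like $-\,\mathrm{const}\cdot r^{-1}$ as $r\to 0$, so $-\Delta_pV\to-\infty$ at the center while $a|\nabla V|^q+bV^s$ stays bounded there; the supersolution inequality fails in a punctured neighbourhood of $x_0$. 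The claim that curvature corrections are ``of the same order'' is true near $\partial B_\rho$ but false at the center. This is precisely why the paper takes barriers built on $\bigl(\frac{R^{p'}-|x|^{p'}}{p'R^{p'-1}}\bigr)$ with $p'=\frac{p}{p-1}>1$ (and on $\frac{R^\beta-|x|^\beta}{\beta R^{\beta-1}}$, $\beta>1$, for $U_3$): then $U'(r)\sim r^{p'-1}$ near $0$, so $|U'|^{p-2}U'\sim r$ and $\frac{N-1}{r}|U'|^{p-2}U'$ stays bounded; the barrier is a genuine $C^1$ supersolution on the whole ball and no singularity or removability argument is needed. Your fix has to change the barrier, not invoke removable singularities — and in any case there is no isolated singularity of $u$ to remove (the solution is bounded at $x_0$; the trouble is entirely on the barrier side).

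The case $q=p$, $s=p-1$ has a second problem. With $u=\mu\log w$ and $a\mu+(p-1)=0$ the gradient term does cancel, giving $-\Delta_p u+a|\nabla u|^p=-\mu^{p-1}w^{-(p-1)}\Delta_p w$, but the absorption term becomes $b\mu^{p-1}(\log w)^{p-1}$, not $\tilde b\,w^{p-1}$. So the substitution does \emph{not} reduce the problem to $-\Delta_pw+\tilde b\,w^{p-1}=0$. The paper instead constructs the explicit logarithmic supersolution
\[
U_2(x)=c_4 a^{-1}\log\!\left(\frac{R^{p'}}{R^{p'}-|x|^{p'}}\right)+c_4 a^{-1}b^{-\frac{1}{p-1}}R^{-\frac{p}{p-1}},
\]
and checks $-\Delta_p U_2+a|\nabla U_2|^p+bU_2^{p-1}\ge0$ directly; you should do the same. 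Likewise for $p-1<q<p$, $s=p-1$ the paper uses a single explicit barrier $U_1$ with an additive constant $c_2$ to absorb the $bu^{p-1}$ term, rather than a sum of two barriers with cross-term issues, which is cleaner and avoids the monotonicity argument you sketch for $-\Delta_p(V_1+V_2)$.
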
 
  \begin{proof} Case $s=p-1$ and $p-1<q<p$. We consider 
  $$U_1(x)=U_1(|x|)=c_1\left(\frac{R^{p'}-|x|^{p'}}{p' R^{p'-1}}\right)^{-\frac{p-q}{q-p+1}}+c_2\in C^1(B_R(0)).$$
  with $p'=\frac{p}{p-1}$ and $c_1,c_2>0$. 
  We have 
  \begin{align*}
 & U_1^{'}(|x|)=\frac{c_1(p-q)}{q-p+1}\frac{|x|^{p'-1}}{R^{p'-1}}\left(\frac{R^{p'}-|x|^{p'}}{p'R^{p'-1}}\right)^{-\frac{1}{q-p+1}},\\& 
U_1^{''}(|x|)=\frac{c_1(p-q)(p'-1)}{q-p+1}\frac{|x|^{p'-2}}{R^{p'-1}}\left(\frac{R^{p'}-|x|^{p'}}{p'R^{p'-1}}\right)^{-\frac{1}{q-p+1}}\\&~~~~~~~~~~~~~~~~~~+\frac{c_1(p-q)}{(q-p+1)^2}\left(\frac{|x|^{p'-1}}{R^{p'-1}}\right)^2\left(\frac{R^{p'}-|x|^{p'}}{p'R^{p'-1}}\right)^{-\frac{1}{q-p+1}-1}
  \end{align*} and 
  \begin{align*}
  A=-\Delta_p U_1+a|\nabla U_1|^q+bU_1^{p-1}\geq  -\Delta_p U_1+a|\nabla U_1|^q+bc_2^{p-1}.
  \end{align*}
  Thus,  for all $x\in B_R(0)$ 
  \begin{align*}
  A& \geq -(p-1)|U^{'}_1(|x|)|^{p-2}U_1^{''}(|x|)-\frac{N-1}{|x|}|U^{'}_1(|x|)|^{p-2}U^{'}_1(|x|)+a|U^{'}_1(|x|)|^{q}+bc_1^{p-1} \\&
  = \left(\frac{c_1(p-q)(p'-1)}{q-p+1}\right)^{p-1} \left(\frac{R^{p'}-|x|^{p'}}{p'R^{p'-1}}\right)^{-\frac{q}{q-p+1}}\left\{-(p-1)\frac{p'-1}{p'}\left(1-\left(\frac{|x|}{R}\right)^{p'}\right)\right.\\&~~~~~~~~~~~~~
 -\frac{1}{q-p+1}\left(\frac{|x|}{R}\right)^{p'}-\frac{N-1}{p'}\left(\frac{|x|}{R}\right)^{p'}\left(1-\left(\frac{|x|}{R}\right)^{p'}\right)\\&~~~~~~~~~~~~~\left.+a\left(\frac{c_1(p-q)}{q-p+1}\right)^{q-p+1}\left(\frac{|x|}{R}\right)^{\frac{q}{q-p+1}}\right\}+bc_2^{p-1}
 \\&\geq  \left(\frac{c_1(p-q)(p'-1)}{q-p+1}\right)^{p-1} \left(\frac{R^{p'}-|x|^{p'}}{p'R^{p'-1}}\right)^{-\frac{q}{q-p+1}}\\&~~~~\times\left\{-\frac{N(p-1)}{p}-\frac{1}{q-p+1}+ a\left(\frac{c_1(p-q)}{q-p+1}\right)^{q-p+1}\left(\frac{|x|}{R}\right)^{\frac{q}{q-p+1}}\right\}+bc_2^{p-1}.
  \end{align*}
  Clearly, one can find $c_1=c_2(N,p,q)a^{-\frac{1}{q-p+1}}>0$ and $c_3=c_3(N,p,q)>0$ such that 
  \begin{align*}
  A\geq -c_3 a^{-\frac{p-1}{q-p+1}}R^{-\frac{q}{q-p+1}}+bc_2^{p-1}.
  \end{align*}
Choosing $c_2=c_3^{\frac{1}{p-1}}a^{-\frac{1}{q-p+1}}b^{-\frac{1}{p-1}}R^{-\frac{q}{(p-1)(q-p+1)}}$, we get 
    \begin{equation}\label{3hv1}
    -\Delta_p U_1+a|\nabla U_1|^q+bU_1^{p-1} \geq0 ~\text{ in } B_R(0).
    \end{equation}
Likewise, we can verify that the function $U_2$ below 
 $$U_2(x)=c_4a^{-1}\log\left(\frac{ R^{p'}}{R^{p'}-|x|^{p'}}\right)+c_4a^{-1}b^{-\frac{1}{p-1}}R^{-\frac{p}{p-1}}$$
belongs to $C^1_+(B_R(0))$ and satisfies
     \begin{equation}
     \label{3hv2}-\Delta_p U_2+a|\nabla U_2|^p +bU_2^{p-1}\geq0 ~\text{ in } B_R(0).
     \end{equation}
While, if   $s>p-1$,
      $$U_3(x)=c_5b^{-\frac{1}{s-p+1}}\left(\frac{R^{\beta}-|x|^{\beta}}{\beta R^{\beta-1}}\right)^{-\frac{p}{s-p+1}}$$ 
   belongs to  $C^1(B_R(0))$ and verifies
      \begin{equation}
      \label{3hv3}-\Delta_p U_3+bU_3^s \geq0 ~\text{ in } B_R(0),
      \end{equation}
       for some  positive constants $c_4=c_4(N,p,q)$,  $c_5=c_5(N,p,s)$ and $\beta=\beta(N,p,q)>1$.\\We emphasize the fact that with the condition $1<p\le  2$ and $q\geq 1$,  equation \eqref{3hveqma} satisfies a comparison principle, see \cite[Theorem 3.5.1, corollary 3.5.2]{33PUSE}.   
  Take a sequence of smooth domains $\Omega_n$ satisfying  $\Omega_n\subset \overline{\Omega}_n\subset\Omega_{n+1}$ for all $n$ and $\bigcup\limits_{n = 1}^\infty  {{\Omega _n}}  = \Omega$. For each $n,k\in \mathbb{N}^*$, there exist nonnegative solution $u_{n,k}=u\in W^{1,p}_k(\Omega_n):= W^{1,p}_0(\Omega_n)+k$ of equation \eqref{3hveqma} in $\Omega_n$.\\
  Since $-\Delta_p u_{k,n}\le 0$ in $\Omega_n$, so using the maximum principle we get  $u_{n,k}\le k$ in $\Omega_n$ for all $n$. Thus, by standard regularity (see \cite{33Bi} and \cite{33Li}),  $u_{n,k}\in C^{1,\alpha}(\overline{\Omega_n})$ for some $\alpha\in (0,1)$. It follows from the comparison principle and \eqref{3hv1}-\eqref{3hv3},  that 
  
  $$u_{n,k}\le u_{n,k+1}\qquad\text{in }\;\Omega_n$$ 
  and \eqref{3hv4}-\eqref{3hv6} are satisfied with $u_{n,k}$ and $\Omega_n$ in place of  $u$ and $\Omega$ respectively. 
  From this, we derive uniform local bounds for $\{u_{n,k}\}_k$, and by standard interior regularity (see \cite{33Bi}) we obtain uniform local bounds for $\{u_{n,k}\}_k$ in $C^{1,\eta}_{loc}(\Omega_n)$. It implies that the sequence $\{u_{n,k}\}_k$ is pre-compact in $C^{1}$. Therefore, up to a subsequence, $u_{n,k}\to u_n$ in $C^{1}(\Omega_n)$. Hence, we can verify that $u_n$ is a solution of \eqref{3hveqma} and satisfies \eqref{3hv4}-\eqref{3hv6}  with  $u_n$ and $\Omega_n$ replacing $u$ and $\Omega$ and $u_n(x)\to \infty$ as $d(x,\Omega_n)\to 0$.\\
  Next, since $u_{n,k}\geq u_{n+1,k}$ in $\Omega_n$ there holds $u_n\geq u_{n+1}$ in $\Omega_n$. In particular, $\{u_n\}$ is  uniformly locally bounded  in $\Omega$. Arguing as  above, we obtain $u_n\to u$ in $C^1(\Omega)$, thus $u$ is a solution of \eqref{3hveqma} in $\Omega$ and satisfies \eqref{3hv4}-\eqref{3hv6}. Clearly, $u$ is the maximal solution of \eqref{3hveqma}. 
  \end{proof}
\begin{theorem}\label{3hvthHJ2} Let $q_1>p-1$ and $1<p\leq 2$.  Assume that equation \eqref{3hvIn8} admits a solution with $q=q_1$.  Then for any $a>0, b> 0$ and $q\in(p-1,\frac{pq_1}{q_1+1})$,  $ s\in [p-1,q_1)$ equation \eqref{3hveqma} has a large solution satisfying \eqref{3hv4} and \eqref{3hv5}.
\end{theorem}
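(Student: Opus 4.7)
The strategy is to show that the maximal nonnegative solution $u\in C^{1}(\Omega)$ provided by Proposition~\ref{3hvmaxso}, which already satisfies the upper bounds \eqref{3hv4}--\eqref{3hv5}, is in fact a large solution. This is done by bounding $u$ from below by a suitable truncated rescaling of a large solution $w$ of $-\Delta_p w+w^{q_1}=0$ in $\Omega$. The hypothesis guarantees the existence of such a $w$; by replacing it if necessary with the minimal large solution, constructed as the monotone limit $w=\lim_n w_n$ on a smooth exhaustion $\Omega_n\nearrow\Omega$ (where $w_n$ is the large solution of the pure absorption equation in $\Omega_n$), one may assume $w$ arises from the same exhaustion used for $u$.

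The key preparatory estimate is the local Bernstein/Keller--Osserman-type gradient bound
\[
|\nabla w(x)|^{p}\le C_{1}\,w(x)^{q_{1}+1}\qquad\forall\,x\in\{w\ge 1\},
\]
with $C_{1}=C_{1}(N,p,q_{1})$; it follows from the universal Keller--Osserman upper bound $w(x)\le C\rho(x)^{-p/(q_{1}-p+1)}$ combined with the interior $C^{1,\alpha}$ regularity of Tolksdorf/Lieberman applied on balls of radius comparable to $\rho(x)$. For $\lambda\in(0,1]$ small and $M>0$ large, I then consider the candidate subsolution
\[
v:=\lambda(w-M)_{+}.
\]
On $\{w\le M\}$, $v\equiv 0$ and the inequality $-\Delta_p v+a|\nabla v|^{q}+b v^{s}\le 0$ is trivial. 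On $\{w>M\}$, using $-\Delta_p w=-w^{q_{1}}$ and the gradient bound,
\[
-\Delta_p v+a|\nabla v|^{q}+b\,v^{s}\le w^{q_{1}}\!\left(-\lambda^{p-1}+aC_{1}^{q/p}\lambda^{q}\,w^{\frac{q(q_{1}+1)}{p}-q_{1}}+b\lambda^{s}w^{\,s-q_{1}}\right).
\]
Both exponents in the parenthesis are strictly negative by $q<\frac{pq_{1}}{q_{1}+1}$ and $s<q_{1}$, so the parenthesis tends to $-\lambda^{p-1}<0$ as $w\to\infty$; choosing $M$ large enough (depending on $\lambda,a,b$ and the exponents) renders it nonpositive throughout $\{w>M\}$, and $v$ becomes a global weak subsolution of \eqref{3hveqma}.

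For the comparison with $u$, write $u=\lim_n u_n$ where $u_n$ is the large solution of \eqref{3hveqma} in $\Omega_n$ (cf.\ Proposition~\ref{3hvmaxso}), set $v_{n}:=\lambda(w_{n}-M)_{+}$, and work on the bounded region $E_{n,K}:=\{M<w_{n}<K\}\subset\Omega_{n}$ for $K>M$. On the inner piece $\{w_{n}=M\}$ of the boundary, $v_{n}=0\le u_{n}$; on the outer piece $\{w_{n}=K\}$, which for $K$ large lies in an arbitrarily thin neighborhood of $\partial\Omega_{n}$, the blow-up of $u_{n}$ at $\partial\Omega_{n}$ gives $u_{n}\ge\lambda(K-M)=v_{n}$ once $K$ is chosen large enough. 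The comparison principle \cite[Theorem~3.5.1]{33PUSE} (applicable since $1<p\le 2$ and $q\ge 1$) yields $v_{n}\le u_{n}$ on $E_{n,K}$; letting $K\to\infty$ and then $n\to\infty$ gives $\lambda(w-M)_{+}\le u$ in $\Omega$. Since $w(x)\to\infty$ as $\rho(x)\to 0$, the same holds for $u$, which is therefore a large solution satisfying \eqref{3hv4}--\eqref{3hv5}.

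The main obstacle is the subsolution verification above: it is here that both exponent restrictions $q<\frac{pq_1}{q_1+1}$ and $s<q_{1}$ are consumed in an essential way, and it is crucial to have the sharp gradient estimate $|\nabla w|^{p}\le C_1 w^{q_{1}+1}$ on $\{w\ge 1\}$ at one's disposal. The remaining steps are standard boundary comparison on level sets of $w$ coupled with a diagonal passage to the limit along the exhausting sequence.
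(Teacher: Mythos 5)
Your overall strategy—reducing the existence of a large solution of the viscous Hamilton--Jacobi equation to the construction of a subsolution built from the given large solution $w$ of the pure absorption equation, followed by comparison with the maximal solution $u$ of Proposition~\ref{3hvmaxso}—is the right one, and the exponent arithmetic in your subsolution verification (both exponents $\tfrac{q(q_1+1)}{p}-q_1$ and $s-q_1$ strictly negative) is exactly where the hypotheses $q<\tfrac{pq_1}{q_1+1}$ and $s<q_1$ enter, just as in the paper. However, the path you take is genuinely different from the paper's, and it rests on a step that is not justified as written.

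The gap is your ``Bernstein/Keller--Osserman'' gradient bound $|\nabla w|^p\le C_1 w^{q_1+1}$ on $\{w\ge 1\}$, which you claim ``follows from the universal Keller--Osserman upper bound $w\le C\rho^{-p/(q_1-p+1)}$ combined with interior $C^{1,\alpha}$ regularity applied on balls of radius comparable to $\rho(x)$.'' That reasoning only yields
\[
|\nabla w(x)|\le C\,\rho(x)^{-1}\sup_{B_{\rho(x)/2}(x)}w \le C'\rho(x)^{-\frac{q_1+1}{q_1-p+1}},
\]
which is an upper bound by a power of $\rho(x)^{-1}$, not by a power of $w(x)$. To pass to $|\nabla w(x)|^p\le C_1 w(x)^{q_1+1}$ you would also need the matching \emph{lower} Keller--Osserman bound $w(x)\ge c\,\rho(x)^{-p/(q_1-p+1)}$, which is a geometric property of the boundary (it holds, e.g., under an exterior ball condition), not a universal fact; here $\partial\Omega$ is only assumed to satisfy the Wiener-type condition making a large solution exist, and the boundary may be highly irregular. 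The gradient estimate you invoke is in fact a genuine Bernstein estimate (for $p=2$ it is essentially \cite[Lemma III.1]{33LALI}), proved by a maximum-principle argument applied to a quantity like $|\nabla w|^2 w^{-\beta}$; it does not follow from the two ingredients you cite, and for $p\ne 2$ additional care is required. As stated, then, your proof has a real hole.

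The paper avoids the issue entirely by a different and more elementary device: set $v=\beta w^\sigma$ with $\sigma\in(0,1)$, where $v$ is the given large solution of $-\Delta_p v+v^{q_1}=0$. The concave power change of unknown produces, via the chain rule, an \emph{extra} nonnegative gradient term $(1-\sigma)(p-1)|\nabla w|^p/w$ on the left side of the equation for $w$. Young's inequality with exponent $p/q$ then absorbs $a|\nabla w|^q$ into this extra term at the cost of a lower-order power $Cw^{q/(p-q)}$, which, together with $bw^s$, is dominated by the power term $\beta^{q_1-p+1}\sigma^{1-p}w^{\sigma(q_1-p+1)+p-1}$ for $w\ge M$, provided $\sigma\in(0,1)$ is chosen so that $\sigma(q_1-p+1)+p-1>\max\{s,\,q/(p-q)\}$ — which is possible precisely when $s<q_1$ and $q<\tfrac{pq_1}{q_1+1}$. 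The rest (truncating to $\{w\ge M\}$, rescaling by $\varepsilon<1$, and comparing with $u$ on $\Omega\setminus\overline{\Omega'}$ with the Pucci--Serrin comparison principle) is parallel to what you do, but no gradient bound is ever needed. If you wish to keep your truncation $\lambda(w-M)_+$ approach, you must either prove the Bernstein estimate directly (nontrivial for $p\ne 2$) or cite a reference that establishes it; otherwise the argument does not close.
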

\begin{proof}   Assume that equation \eqref{3hvIn8} admits a solution $v$ with $q=q_1$ and set $v=\beta w^\sigma$ with $\beta>0,\sigma\in (0,1)$, then $w>0$ and 
\begin{align*}
-\Delta_pw+(-\sigma+1)(p-1)\myfrac{\abs{\nabla w}^p}{w}+\beta^{q_1-p+1}\sigma^{-p+1}w^{\sigma(q_1-p+1)+p-1}=0~\text{in }\Omega.
\end{align*}
If we impose $\max\{\frac{s-p+1}{q_1-p+1},\left(\frac{q}{p-q}-p+1\right)\frac{1}{q_1-p+1}\}<\sigma<1$, we can see that
\begin{equation*}
(-\sigma+1)(p-1)\myfrac{\abs{\nabla w}^p}{w}+\beta^{q_1-p+1}\sigma^{-p+1}w^{\sigma(q_1-p+1)+p-1}\geq a|\nabla w|^q+bw^s~~\text{in }~\{x:w(x)\geq M\},
\end{equation*}
where a positive constant $M$ depends on $p,q_1,q,s,a,b$. 
Therefore
\begin{equation*}
-\Delta_pw+ a\abs{\nabla w}^q+bw^s\le 0~~~\text{in } ~\{x:w(x)\geq M\}.
\end{equation*}
 Now we take an open subset $\Omega'$ of $\Omega$ with $\overline{\Omega'}\subset\Omega$ such that  the set $\{x:w(x)\geq M\}$ contains  $\Omega\backslash\overline{\Omega'}$. So $w$ is a subsolution of $-\Delta_pu+ a\abs{\nabla u}^q+bu^s = 0$ in $\Omega\backslash\overline{\Omega'}$ and the same property holds with $w_\varepsilon:=\varepsilon w$ for any $\varepsilon\in (0,1)$. Let $u$ be as in Proposition \ref{3hvmaxso}. Set $\min\{u(x):x\in \partial \Omega'\}=\theta_1>0$ and $\max\{w(x):x\in \partial \Omega'\}=\theta_2\geq M$. Thus $w_\varepsilon <u $ on $\partial \Omega'$ with $\varepsilon<\min\{\frac{\theta_1}{\theta_2},1\}$. Hence, from the construction of $u$ in the proof of Proposition \ref{3hvmaxso} and the comparison principle,  we obtain $w_\varepsilon \le u$ in $\Omega\backslash\overline{\Omega'}$. This implies the result. 
\end{proof}

\begin{remark}From the proof of above Theorem, we can show that under the assumption as in Proposition \ref{3hvmaxso}, equation \eqref{3hveqma} has a large solution in $\Omega$ if and only if equation \eqref{3hveqma} has a large solution in $\Omega\backslash K$ for some a compact set $K\subset \Omega$ with smooth boundary.
\end{remark}

Now we deal with  \eqref{3hvHJ} in the case $q=p$.
\begin{theorem} Assume that equation \eqref{3hvHJ+} has a large solution in $\Omega$ for some $a,b> 0$, $s> p-1$ and $q=p>1$.
Then for any $a_1,b_1>0$ and $q_1>p-1,s_1\geq p-1$, $1\le q_1\le p\leq 2$, equation \eqref{3hveqma} also has a large solution $u$ in $\Omega$ with parameters $a_1,b_1,q_1,s_1$ in place of  $a,b,q,s$ respectively, and it satisfies \eqref{3hv4}-\eqref{3hv6}.
\end{theorem}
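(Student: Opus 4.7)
The plan is to mimic the strategy of Theorem~\ref{3hvthHJ2}. Denote by $w\in C^1(\Omega)$ the given large solution, so that $-\Delta_p w + a|\nabla w|^p + bw^s = 0$ in $\Omega$ with $1<p\le 2$ and $s>p-1$. I will transform $w$ into a local subsolution of the target equation with parameters $(a_1,b_1,q_1,s_1)$, then invoke Proposition~\ref{3hvmaxso} to produce the maximal nonnegative solution $U$ and use a comparison argument to force $U$ to blow up at $\partial\Omega$.

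The transformation I propose is $v = w^\sigma$ for a parameter $\sigma\in(0,1)$ to be chosen small. A direct computation, together with the identity $\Delta_p w = a|\nabla w|^p + bw^s$, gives
\begin{align*}
-\Delta_p v + a_1|\nabla v|^{q_1} + b_1 v^{s_1}
&= \sigma^{p-1}w^{(\sigma-1)(p-1)-1}|\nabla w|^p\bigl[(1-\sigma)(p-1)-aw\bigr] \\
&\quad + a_1\sigma^{q_1}w^{(\sigma-1)q_1}|\nabla w|^{q_1} - b\sigma^{p-1}w^{\sigma(p-1)+s-(p-1)} + b_1 w^{\sigma s_1}.
\end{align*}
Since $q_1\le p$, I apply Young's inequality to absorb the middle term into $|\nabla w|^p$ plus a zeroth order residual. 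For $w$ large, the coefficient of $|\nabla w|^p$ is dominated by $-a\sigma^{p-1}w^{(\sigma-1)(p-1)}$; the Young contribution to this coefficient is $O(w^{(\sigma-1)q_1})$, which is negligible because the assumption $q_1>p-1$ and $\sigma<1$ give $(\sigma-1)q_1<(\sigma-1)(p-1)<0$. For the remaining zeroth-order terms, using $s>p-1$ and choosing $\sigma<(s-p+1)/(s_1-p+1)$ (or any $\sigma\in(0,1)$ when $s_1=p-1$), the negative term $-b\sigma^{p-1}w^{\sigma(p-1)+s-(p-1)}$ dominates $b_1 w^{\sigma s_1}$ as $w\to\infty$. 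Hence there exists $M>0$ such that
$$-\Delta_p v + a_1|\nabla v|^{q_1} + b_1 v^{s_1} \le 0 \quad\text{in } \{w\ge M\}.$$

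Because $w$ is a large solution, the superlevel set $\{w\ge M\}$ contains $\Omega\setminus\overline{\Omega'}$ for some smooth open $\Omega'$ with $\overline{\Omega'}\subset\Omega$. The same scaling computation used at the end of the proof of Theorem~\ref{3hvthHJ2} shows that $v_\varepsilon:=\varepsilon v$ remains a subsolution of the target equation on $\Omega\setminus\overline{\Omega'}$ for every $\varepsilon\in(0,1)$, since $q_1,s_1\ge p-1$. Let $U$ be the maximal nonnegative solution of $-\Delta_p U + a_1|\nabla U|^{q_1} + b_1 U^{s_1}=0$ in $\Omega$ furnished by Proposition~\ref{3hvmaxso}, which automatically satisfies \eqref{3hv4}--\eqref{3hv6}. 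For $\varepsilon$ small enough, $v_\varepsilon\le U$ on $\partial\Omega'$; applying the comparison principle on the approximating domains $\Omega_n\setminus\overline{\Omega'}$ from the construction of $U$ (on which the approximating solutions $u_n$ blow up on $\partial\Omega_n$), and then letting $n\to\infty$, yields $v_\varepsilon\le U$ in $\Omega\setminus\overline{\Omega'}$. Since $v_\varepsilon = \varepsilon w^\sigma \to\infty$ at $\partial\Omega$, so does $U$.

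The main obstacle is verifying the subsolution inequality after the transformation: the four terms have distinct scalings in $w$, and closing the inequality requires a quantitative choice of $\sigma$ depending jointly on $p,s,q_1,s_1$. The conditions $q_1>p-1$ and $s>p-1$ are both essential: the former ensures that the Young residual on the coefficient of $|\nabla w|^p$ decays faster than the main negative term, and the latter ensures that the zeroth-order balance can be closed by taking $\sigma$ small enough. Once this quantitative choice is fixed, the rest of the argument follows exactly the template already used in Theorem~\ref{3hvthHJ2}.
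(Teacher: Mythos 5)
Your proof is correct and takes essentially the same route as the paper: the paper sets $u=v^\sigma$ with $\sigma=\frac{s_1-p+1}{s-p+1}+2>1$, while you set $v=w^\sigma$ with $\sigma\in(0,1)$ small; these are the same change of unknown up to replacing $\sigma$ by $1/\sigma$, and the subsequent subsolution verification on $\{w\ge M\}$, rescaling by $\varepsilon$, and comparison against the maximal solution of Proposition~\ref{3hvmaxso} are exactly the paper's steps. Your version simply spells out the Young's-inequality absorption and the explicit constraint $\sigma(s_1-p+1)<s-p+1$, which the paper leaves as ``it is easy to see''.
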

\begin{proof}
For $\sigma>0$ we set $u= v^\sigma$
 thus
 \begin{equation*}
 -\Delta_p v-(\sigma-1)(p-1)\frac{\abs{\nabla v}^p}{v}+a \sigma v^{\sigma-1}\abs{\nabla v}^p+b\sigma^{-p+1}v^{(s-p+1)\sigma+p-1}=0.
 \end{equation*}
 Choose $\sigma=\frac{s_1-p+1}{s-p+1}+2$, it is easy to see that
 \begin{equation*}
 -\Delta_p v+a_1|\nabla v|^{q_1}+b_2v^{s_1}\le 0 ~\text{ in }~ \{x:v(x)\geq M\},
 \end{equation*}
  for some a positive constant $M$ only depending on $p,s, a,b,a_1,b_1,q_1,s_1$.
Similarly as in the proof of Theorem \ref{3hvthHJ2}, we get the result as desired. 
\end{proof}
\begin{remark}If we set $u=e^v$ then $v$ satisfies
    \begin{equation*}\begin{array}{ll}
-\Delta_p v+be^{(s-p+1)v}=\abs{\nabla v}^p(p-1-ae^v)\qquad\text{in }\Omega.
\end{array} \end{equation*}
From this, we can construct a large solution of 
    \begin{equation*}\begin{array}{ll}
-\Delta_p u+be^{(s-p+1)u}=0\qquad\text{in }\Omega\backslash K,
\end{array} \end{equation*}
for any a compact set $K\subset \Omega$ with smooth boundary such that $v\geq ln \left(\frac{p-1}{a}\right)$ in  $\Omega\backslash K$. In case $p=2$,
It would be interesting to see what Wiener type criterion is implied by the existence as such a large solution. We conjecture that this condition must be
\begin{equation*}
\int_{0}^{1}\myfrac{\mathcal{H}_1^{N-2}(B_r(x)\cap \Omega^c)}{r^{N-2}}\frac{dr}{r}=\infty\qquad\forall x\in \partial \Omega.
\end{equation*}
\end{remark}
We now consider the function 
\begin{equation*}
U_4(x)=c\left(\frac{R^\beta-|x|^\beta}{\beta R^{\beta-1}}\right)^{\frac{p}{\gamma+p-1}}~\text{ in } B_R(0), \gamma>0.
\end{equation*}
As in the proof of proposition \ref{3hvmaxso}, it is easy to check  that there exist positive constants $\beta$ large enough and $c$ small enough so that inequality
 $\Delta_p U_4+U^{-\gamma}_4\geq 0$ holds.\\
From this, we get the existence of minimal solution to equation
\begin{equation}\label{3hvquench1}\begin{array} {ll}
\Delta_p u+u^{-\gamma}=0\qquad\text{in}\;\Omega.
\end{array}\end{equation} 
\begin{proposition} Assume $\gamma>0$. Then there exists a minimal solution $u\in C^1(\Omega)$ to equation \eqref{3hvquench1} and it satisfies  
     $u(x)\geq C d(x,\partial\Omega)^{\frac{p}{\gamma+p-1}}$ in $\Omega$. 
\end{proposition}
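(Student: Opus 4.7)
The plan is to build $u$ as the monotone limit of unique solutions of the singular Dirichlet problem on a smooth exhaustion of $\Omega$, using the barrier $U_4$ to propagate the required pointwise lower bound.

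First, fix smooth subdomains $\Omega_n$ with $\overline{\Omega_n}\subset\Omega_{n+1}\subset\Omega$ and $\bigcup_n\Omega_n=\Omega$. In each $\Omega_n$ one solves
$$
 -\Delta_p u_n = u_n^{-\gamma}\ \text{in }\Omega_n,\qquad u_n = 0\ \text{on }\partial\Omega_n,\qquad u_n>0\ \text{in }\Omega_n,
$$
which is the classical singular Dirichlet problem for the $p$-Laplacian: existence and uniqueness of $u_n\in C(\overline{\Omega_n})\cap C^1(\Omega_n)$ follow by regularising through $-\Delta_p u_{n,\varepsilon}=(u_{n,\varepsilon}+\varepsilon)^{-\gamma}$ (where the reaction is bounded and decreasing in $u$, giving a unique $u_{n,\varepsilon}>0$ by monotone iteration or minimisation) and letting $\varepsilon\downarrow 0$. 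Since $u\mapsto -u^{-\gamma}$ is nondecreasing on $(0,\infty)$, the weak comparison principle for the $p$-Laplacian applies to this equation; because $u_{n+1}>0=u_n$ on $\partial\Omega_n$, one obtains the monotonicity $u_n\le u_{n+1}$ in $\Omega_n$.

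For the lower bound, fix $x_0\in\Omega_n$ and set $r:=d(x_0,\partial\Omega_n)$. The translated function $\tilde U_4(x):=U_4(x-x_0)$, with radius $r$ replacing $R$, satisfies $\Delta_p\tilde U_4+\tilde U_4^{-\gamma}\geq 0$ in $B_r(x_0)\subset\Omega_n$ and vanishes on $\partial B_r(x_0)$, where $u_n>0$; comparison forces $u_n\geq\tilde U_4$ in $B_r(x_0)$, and evaluation at $x_0$ gives $u_n(x_0)\geq c\,d(x_0,\partial\Omega_n)^{p/(\gamma+p-1)}$. Setting $u:=\lim_n u_n$, on any compact $K\subset\Omega$, once $n$ is large enough that $d(K,\partial\Omega_n)\geq\delta>0$, this lower bound makes $u_n^{-\gamma}$ uniformly bounded on $K$; combined with a supersolution on $\Omega_n$ (for instance a power $A\phi_n^\beta$ of the $p$-torsion function $\phi_n$, with $A,\beta$ chosen so that $-\Delta_p(A\phi_n^\beta)\geq (A\phi_n^\beta)^{-\gamma}$) and the Tolksdorf--DiBenedetto--Lieberman $C^{1,\alpha}$-regularity, this produces uniform $C^1$-bounds on $K$ independent of $n$. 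A diagonal subsequence converges in $C^1_{\mathrm{loc}}(\Omega)$ to $u\in C^1(\Omega)$ solving $\Delta_p u+u^{-\gamma}=0$ in $\Omega$, and letting $n\to\infty$ in the lower bound (with $d(x,\partial\Omega_n)\to d(x,\partial\Omega)$) yields $u(x)\geq C\,d(x,\partial\Omega)^{p/(\gamma+p-1)}$. Minimality is immediate from the same comparison: any other nonnegative $C^1$-solution $v$ vanishing on $\partial\Omega$ satisfies $v>0=u_n$ on $\partial\Omega_n$, hence $v\geq u_n$ in $\Omega_n$ and $v\geq u$ in $\Omega$.

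The main difficulty is the uniform-in-$n$ upper bound for $u_n$ on compact subsets of $\Omega$: since $u^{-\gamma}$ is a positive source, no Keller--Osserman-type barrier is available from above, and one has to construct a suitable supersolution on each $\Omega_n$ (built from the $p$-torsion function) that converts the $U_4$ lower bound into a local upper bound. Once this step is in place, the remaining compactness, passage to the limit, and minimality arguments are routine.
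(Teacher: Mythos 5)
The paper itself only provides the subsolution barrier $U_4$ and then asserts the proposition, so your exhaustion-plus-comparison construction is the natural completion of what the paper implies, and it is essentially correct. Two refinements are worth being explicit about. First, when $\gamma\geq 1$ the term $U_4^{-\gamma}$ fails to be locally integrable near $\partial B_r(x_0)$ where $U_4$ vanishes, so the comparison $u_n\geq U_4$ is cleanest at the regularized level you already introduced: since adding a constant does not change $\Delta_p$ and $\bigl((U_4-\varepsilon)+\varepsilon\bigr)^{-\gamma}=U_4^{-\gamma}$, the truncation $\max(U_4-\varepsilon,0)$ is a subsolution of $-\Delta_p w=(w+\varepsilon)^{-\gamma}$ with bounded data, the comparison with $u_{n,\varepsilon}$ is then routine, and one lets $\varepsilon\to0$ at the end. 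Second, the uniform local upper bound you single out as the main difficulty is in fact immediate from an explicit global supersolution: with $p'=p/(p-1)$ the function $V(x)=A-B|x-x_0|^{p'}$ has $-\Delta_p V=N(Bp')^{p-1}$, a positive constant, so fixing $B$ and taking $A$ large enough that $V$ is bounded away from $0$ on $\overline\Omega$ and $N(Bp')^{p-1}\geq(\inf_{\overline\Omega}V)^{-\gamma}$ yields a positive supersolution dominating every $u_n$; this avoids the $p$-torsion function entirely (though your construction also works, with $\beta=(p-1)/(p-1+\gamma)$ and $A$ large). The rest of your argument — monotonicity in $n$, local $C^{1,\alpha}$ compactness, passage to the limit in the lower bound, and minimality by comparison — is sound.
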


We can verify that if the boundary of $\Omega$ is satisfied \eqref{3hvIn3}, then above minimal solution $u$ belongs to 
$C(\overline{\Omega})$, vanishes on $\partial\Omega$ and it is therefore a solution to the quenching problem
\begin{equation}\label{3hvquench2}\begin{array} {ll}
\Delta_p u+u^{-\gamma}=0\qquad\text{in}\;\Omega,\\
\phantom{\Delta_p +u^{-\gamma}}
u=0\qquad\text{in}\;\partial\Omega.
\end{array}\end{equation} 
 
\begin{theorem} \label{3hvend} Let $\gamma>0$. 
Assume that there exists a solution $u\in C(\overline{\Omega})$ to  problem \eqref{3hvquench2}.
Then, for any $a,b>0$ and $q>p-1, s\geq p-1$, $1\le q\le p\leq 2$, equation \eqref{3hveqma}  admits a large solution in $\Omega$ and it satisfies \eqref{3hv4}-\eqref{3hv6}. 
\end{theorem}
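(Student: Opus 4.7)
The plan is to follow the strategy used in Theorem \ref{3hvthHJ2}: exploit the quenching solution $u$ to manufacture a subsolution of \eqref{3hveqma} that blows up on $\partial\Omega$, then dominate it pointwise by the maximal solution $U$ furnished by Proposition \ref{3hvmaxso}. The natural substitution is the logarithmic change $w=\alpha\log(1/u)$ for a small $\alpha>0$ to be fixed: this $w$ belongs to $C^1(\Omega)$, is positive on $\{u<1\}$, and blows up at $\partial\Omega$ because $u\in C(\overline\Omega)$ vanishes there.

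Using $\Delta_p u=-u^{-\gamma}$, a direct differentiation yields
\[
-\Delta_p w \;=\; -\alpha^{p-1}u^{-\gamma-p+1} \;-\; \alpha^{p-1}(p-1)\,\frac{|\nabla u|^p}{u^p},
\]
so both terms are strictly negative. With $|\nabla w|^q=\alpha^q|\nabla u|^q/u^q$ and $w^s=\alpha^s(\log(1/u))^s$, I would split into two sub-cases. When $q=p$ (so $p=2$), choose $\alpha<(p-1)/a$ so that the coefficient $a\alpha^p-(p-1)\alpha^{p-1}$ of $|\nabla u|^p/u^p$ is negative. When $p-1<q<p$, rewrite $|\nabla u|^q/u^q = (|\nabla u|^p/u^p)^{q/p}$ and apply Young's inequality with exponents $p/q$ and $p/(p-q)$ to obtain
\[
a\alpha^q\frac{|\nabla u|^q}{u^q}\;\leq\;\frac{\alpha^{p-1}(p-1)}{2}\frac{|\nabla u|^p}{u^p}+C(a,\alpha,p,q).
\]
In either case one arrives at
\[
-\Delta_p w+a|\nabla w|^q+bw^s\;\leq\;-\alpha^{p-1}u^{-\gamma-p+1}+C+b\alpha^s(\log(1/u))^s,
\]
which is non-positive on $\{u<u_0\}$ for some $u_0\in(0,1)$ small, since $u^{-\gamma-p+1}$ dominates any power of $\log(1/u)$ as $u\to 0$.

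With $w$ a subsolution on $\{u<u_0\}$ the remaining argument imitates the end of the proof of Theorem \ref{3hvthHJ2}. Pick $\Omega'\subset\subset\Omega$ so that $\Omega\setminus\overline{\Omega'}\subset\{u<u_0\}$ (possible by continuity of $u$ and the fact that $u\equiv 0$ on $\partial\Omega$), let $U$ be the maximal solution from Proposition \ref{3hvmaxso}, set $\theta_1=\min_{\partial\Omega'}U>0$ and $\theta_2=\max_{\partial\Omega'}w<\infty$, and choose $\epsilon\in(0,\min\{1,\theta_1/\theta_2\})$. Since $q,s\geq p-1$ and $\epsilon<1$ imply $\epsilon^q,\epsilon^s\leq\epsilon^{p-1}$, the scaled function $w_\epsilon=\epsilon w$ still satisfies the subsolution inequality on $\Omega\setminus\overline{\Omega'}$. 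Applying the comparison principle of \cite{33PUSE} on the approximating smooth subdomains $\Omega_n$ used to construct $U$ (on whose inner boundaries the approximants to $U$ blow up while $w_\epsilon$ remains bounded) and letting $n\to\infty$ yields $w_\epsilon\leq U$ in $\Omega\setminus\overline{\Omega'}$. Since $w_\epsilon(x)\to\infty$ as $\rho(x)\to 0$, so does $U(x)$, and the bounds \eqref{3hv4}--\eqref{3hv6} are inherited from Proposition \ref{3hvmaxso}.

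The main obstacle is the subsolution computation itself. The logarithmic rather than a power substitution $w=u^{-\sigma}$ is forced on us because $|\nabla u|$ is not controlled near $\partial\Omega$: only with $w=\alpha\log(1/u)$ do the positive gradient term $a|\nabla u|^q/u^q$ and the dominant negative term $-\alpha^{p-1}(p-1)|\nabla u|^p/u^p$ share the common quantity $|\nabla u|/u$, so Young's inequality suppresses $|\nabla u|$ completely and the singular term $u^{-\gamma-p+1}$ coming from $\Delta_p u=-u^{-\gamma}$ does the rest. This is also why no Hopf-type boundary lemma nor any regularity assumption on $\partial\Omega$ beyond the existence of the quenching solution is needed.
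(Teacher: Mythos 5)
Your proof is correct and uses essentially the same strategy as the paper, which makes the substitution $u=e^{-\frac{a}{p-1}v}$, i.e.\ $v=\frac{p-1}{a}\log(1/u)$, so that the first-order term comes out as exactly $a|\nabla v|^p$ and the zeroth-order term becomes $\bigl(\frac{p-1}{a}\bigr)^{p-1}e^{\frac{a}{p-1}(\gamma+p-1)v}$, after which the subsolution inequality on $\{v\geq M\}$ and the comparison with the maximal solution of Proposition \ref{3hvmaxso} proceed just as in your argument; you pick a smaller coefficient $\alpha$ and invoke Young's inequality for $q<p$ instead of relying on the boundedness of $t\mapsto t^q-t^p$, which is a cosmetic variation of the same idea. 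One small slip: $q=p$ does not force $p=2$, only $p\le 2$, though this does not affect the argument.
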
 
\begin{proof}
We set $u=e^{-\frac{a}{p-1}v}$, then $v$ is a large solution of 
    \begin{equation*} \begin{array}{ll}
-\Delta_p v+a\abs{\nabla v}^p+\left(\frac{p-1}{a}\right)^{p-1}e^{\frac{a}{p-1}(\gamma+p-1)v}=0\qquad&\text{in }\Omega.
\end{array} \end{equation*}
So  \begin{equation*}\begin{array}{ll}
-\Delta_p v+a\abs{\nabla v}^q+bv^s\le 0\qquad&\text{in } \{x:v(x)\geq M\},
\end{array} \end{equation*}
for some a positive constant $M$ only depending on $p,q,s,a,b,\gamma$. Similarly to the proof of Theorem \ref{3hvthHJ2}, we get the result as desired.
\end{proof}


\end{document}